\newtheorem{theo}{Theorem}
\newtheorem{corollaire}[theo]{Corollary}
\newtheorem{lemme}[theo]{Lemma}
\newtheorem{prop}[theo]{Property}
\theoremstyle{definition}
\newtheorem{definition}[theo]{Definition}
\newtheorem{def-theo}[theo]{Definition-Property}
\theoremstyle{definition}
\newtheorem{remarque}[theo]{Remark}
\newtheorem{question}[theo]{Question}
\newtheorem{remarques}[theo]{Remarks}
\DeclareMathOperator{\ii}{\textit{i}}
\DeclareMathOperator{\id}{id}
\DeclareMathOperator{\ext}{Ext}
\DeclareMathOperator{\Mod}{Mod}
\DeclareMathOperator{\dd}{\mathbb{D}}
\DeclareMathOperator{\tei}{\mathcal{T}\!\left( X_0 \right)}
\DeclareMathOperator{\mf}{\mathcal{M}\mathcal{F}}
\DeclareMathOperator{\pmf}{\mathcal{P}\mathcal{M}\mathcal{F}}
\DeclareMathOperator{\ray}{\mathcal{R}}
\DeclareMathOperator{\rs}{\mathbb{R}_{\geq 0}^{\mathcal{S}}}
\DeclareMathOperator{\prs}{\textrm{P}\mathbb{R}_{\geq 0}^{\mathcal{S}}}
\DeclareMathOperator{\teith}{\overline{\tei}^{\textrm{\scriptsize{Th}}}}
\DeclareMathOperator{\teigm}{\overline{\tei}^{\textrm{\scriptsize{GM}}}}
\DeclareMathOperator{\teigmb}{\partial_{\textrm{\scriptsize{GM}}}\!\!\tei}
\DeclareMathOperator{\cvgth}{\overset{{\scriptscriptstyle \textrm{Th}}}{\longrightarrow}}
\DeclareMathOperator{\cvggm}{\overset{\scriptscriptstyle \textrm{GM}}{\longrightarrow}} 
\DeclareMathOperator{\horo}{\mathcal{H}}
\DeclareMathOperator{\sl2}{\textrm{SL}_{2}\!\left( \mathbb{R}\right)}
\DeclareMathOperator{\so2}{\textrm{SO}_{2}\!\left( \mathbb{R}\right)}
\DeclareMathOperator{\qd}{\mathcal{Q}}
\DeclareMathOperator{\nlongrightarrow}{\not\hspace{-4pt}\longrightarrow}
\newcommand{\quot}[2]{{\left. \raisebox{1.5px}{$#1$}\middle/ \raisebox{-2px}{$#2$}\right.}}
\renewcommand{\epsilon}{\mathcal{E}}
\begin{document}

\title[Convergence of some horocyclic deformations]{Convergence of some horocyclic deformations to the Gardiner-Masur boundary}
\author[Vincent Alberge]{Vincent Alberge}
\thanks{The author is partially supported by the French ANR grant FINSLER. The author is also partially supported by the U.S. National Science Foundation grants DMS 1107452, 1107263, 1107367 "RNMS: GEometric structures And Representation varieties" (the GEAR Network).}

\begin{abstract}
We introduce a deformation of Riemann surfaces and we are interested in the convergence of this deformation to a point of  the Gardiner-Masur boundary of  Teichmüller space. This deformation, which we call the horocyclic deformation, is directed by a projective measured foliation and  belongs to a certain horocycle in a Teichmüller disc. Using works of  Marden and  Masur in \cite{marden&masur} and Miyachi in \cite{miyachi1, miyachi2, miyachigromov}, we show that the horocyclic deformation converges if its direction is given by a simple closed curve or a uniquely ergodic measured foliation.
\end{abstract}

\subjclass[2010]{30F60, 32G15, 30F45, 32F45.}
\keywords{Extremal length, Teichmüller space, Teichmüller distance, Thurston asymmetric metric, Teichmüller disc, Gardiner-Masur boundary, Thurston boundary.}

\maketitle

\section{Introduction}

In this paper, all Riemann surfaces considered are conformal structures on a closed connected surface of finite topological type $\left( g, n\right)$, where $g$ represents the genus and $n$ the number of marked points. We shall assume that for such a Riemann surface $X$, the corresponding Euler charasteristic  $\chi \left( X \right)= 2-2g -n$ is strictly negative. By the uniformization theorem, this implies that $X$ is endowed with a unique conformal metric of constant curvature $-1$. 

To a fixed conformal structure $X_0$, we can associate the Teichmüller space $\tei$, which classifies in some sense the different conformal structures with the same topological type as $X_0$.  The more precise definition will be recalled in Subsection \ref{secttei}. This definition leads to the conformal point of view of the Teichmüller space. There exists an equivalent point of view on this space which uses the uniformization theorem, namely the hyperbolic one. It allows to define $\tei$ as the set of all hyperbolic metrics defined on the underlying surface of $X_0$ up to isometries isotopic to the identity.  Depending on the point of view we use, there are two natural tools, the hyperbolic length, for the hyperbolic point of view and the extremal length, for the conformal point of view. These two tools lead to two well-known metrics (one of them is asymmetric) and also respectively two compactifications, the Thurston one and the Gardiner-Masur one.  There are many deformations of conformal structures or of hyperbolic structures, which may be geodesic or not with respect to the metric used. 

Using the hyperbolic point of view, we can consider two natural deformations in the Teichmüller space, stretches  and earthquakes. The stretch line is directed by a complete geodesic lamination on a hyperbolic surface and defines a geodesic line with respect to the so-called Thurston asymmetric metric on $\tei$. For any complete geodesic lamination, and so for any stretch line, we can associate a measured foliation which is usually called the stump of the given direction. Théret showed in \cite{theret2} that such a line converges in the reverse direction in the Thurston boundary to the projective class of the stump, if the stump is either a simple closed curve or a uniquely ergodic foliation. Moreover, Théret also solved in \cite{theret3} the negative convergence of stretch line whose the associated stump is a rational measured foliation. The limit being the barycenter of the stump. Note that the convergence in the positive direction has been  solved by Papadopoulos in \cite{papadop}. 
The earthquake deformation, which was introduced by Thurston and which generalizes the Fenchel-Nielsen deformation,  is directed by a measured foliation class. It is well known that the hyperbolic length of the direction remains invariant along this deformation. Moreover, the earthquake converges  to the projective class of the corresponding direction in the Thurston boundary. The earthquake deformations are not geodesic as the stretch lines are, but they have a natural behaviour relative to each other. Indeed, under some assumption on directions, Théret showed in \cite{theret} that doing first a stretch  and after an earthquake  is the same  as doing first an earthquake and after a stretch. In this paper, we shall keep in mind all these properties and consider other deformations and their convergence in the Thurston boundary and in the Gardiner-Masur boundary.. 

From the conformal point of view of $\tei$, there exists a well-known conformal deformation which is called here Teichmüller deformation and which plays the role of stretch line in this point of view. This deformation, which is  directed by a  projective class of measured foliation, is geodesic with respect to the Teichmüller metric, and some investigations about the convergence in these two compactifications have already been done. Indeed, in the case of Thurston's compactification, it is well known that a Teichmüller deformation directed by a simple closed curve converges to its direction. Masur showed in \cite{masurcommute} that this is also the case if the direction is uniquely ergodic. Later, Lenzhen in \cite{lenzhen} constructed an example of such a deformation which does not converge in this compactification. However, in the Gardiner-Masur compactification, the convergence is most natural. Liu and Su in \cite{liu&su} and Miyachi in \cite{miyachi2}, proved that any Teichmüller deformation converges. Using Kerckoff's computations, Miyachi also gave in \cite{miyachi1} the explicit limit when the direction is given by a rational measured foliation. Walsh in \cite{walsh2} generalized this result by giving the limit for any direction. 

In this paper, we shall be interested in another deformation, which we called horocyclic deformation, and which will be the natural analogue of the earthquake deformation from the conformal point of view. The horocyclic deformation is directed by a projective class of measured foliation and  stays in a Teichmüller disc. This  deformation also presents a nice behaviour with respect to the Teichmüller deformation. Indeed,  seen as maps, these two deformations commute if the directions are the same.  We will see that the extremal length of the direction stays invariant along the associated horocyclic deformation. We will also see that this deformation converges in the Gardiner-Masur compactification if the given direction is either a simple closed curve or a projective class of a uniquely ergodic foliation. In these two cases, we shall give the limits and remark that they correspond to limits of associated Teichmüller's deformations. We will also see that in these two particular cases the horocyclic deformations converge also in the Thurston compactification. In contrast with the hyperbolic point of view, we will see through an example that there exists a horocyclic deformation which does not converge to the same limit as the corresponding Teichmüller deformation. 

\sloppy
\subsection*{Acknowledgments} The author wants to express his gratitude to Professors Ken'ichi Ohshika and Athanase Papadopoulos for their useful comments and suggestions. The author would like to thank Professor Hideki Miyachi for his interest and for his help in the proof of Theorem \ref{theoergodic}. The author also thanks Daniele Alessandrini, Maxime Fortier Bourque and Weixu Su for useful conversations.

The author acknowledges the hospitality of the Graduate Center of  CUNY where a part of this work was done. The author wishes to thank especially Professor Ara Basmajian for his kindness during this stay.
\fussy

\section{Backgrounds}

\subsection{Teichmüller space}\label{secttei}

Let $X_0$ be a fixed Riemann surface. We say that $\left( X_1 , f_1 \right)$ and $\left( X_2 , f_2 \right)$, where $f_i : X_0 \rightarrow X_i$ ($i=1, \,2$) is a quasiconformal homeomorphism, are equivalent if there exists a conformal map $h: X_1 \rightarrow X_2$ which is homotopic to $f_2 \circ f_1 ^{-1}$. The \textit{Teichmüller space} of $X_0$, denoted by $\tei$, is the set of equivalence classes of pairs $\left( X, f \right)$. For a pair $\left( X, f \right)$, we denote the corresponding point in $\tei$  by $\left[ X, f \right]$. We call $x_0 =\left[ X_0 , \id\right]$ the base point of $\tei$. There is a natural distance on $\tei$, called the \textit{Teichmüller distance} defined as the follows. Let $x=\left[ X, f \right]$ and $y=\left[ Y, g \right]$ be two points in $\tei$. The Teichmüller distance between $x$ and $y$ is
\begin{equation}\label{defmetrique}
d_{T}\left( x, y \right)= \inf\log{K_h},
\end{equation}
where $h$ is taken  over all quasiconformal homeomorphisms homotopic to $g\circ f^{-1}$ and $K_h$ denotes the \textit{quasiconformal dilatation} of $h$. We recall that the quasiconformal dilatation of $h$ is defined as the essential supremum of 
$$
p\in X_1 \mapsto \frac{\vert \partial\!_{\overline{z}} h\left( z \right) \vert + \vert \partial\!_{z} h\left( z \right)\vert}{\vert \partial\!_{z} h\left( z \right)\vert -\vert \partial\!_{\overline{z}} h\left( z \right)\vert},
$$
where for local coordinates $z=x+\ii y$,
\begin{align*}
\begin{cases}
\partial\!_{z} h &= \frac{1}{2}\left( \frac{\partial h}{\partial x} - \ii \frac{\partial h}{\partial y}\right) ,\\
\partial\!_{\overline{z}} h &= \frac{1}{2}\left( \frac{\partial h}{\partial x} + \ii \frac{\partial h}{\partial y}\right).
\end{cases}
\end{align*}

\subsection{Measured foliations}

We say that a simple closed curve on $X_0$ is \textit{essential} if it is not homotopic to a point. We denote the set of homotopy classes of essential simple closed curves by $\mathcal{S}\left(X_0 \right)$ or by $\mathcal{S}$ if there is no confusion. Given two elements $\alpha$ and $\beta$ of $\mathcal{S}$, we define their \textit{geometric intersection number}, denoted by $i\left( \alpha, \beta \right)$, as the minimal intersection number of two essential simple closed curves in their homotopy classes. To simplify notation, we call $\mathcal{S}$ the set of simple closed curves.

We set $\mathbb{R}_{+}\times\mathcal{S}=\left\lbrace t\cdot\alpha \,\mid\, t\geq 0 \textrm{ and } \alpha\in\mathcal{S} \right\rbrace$ and we call it the set of \textit{weighted simple closed curves}. It is known that 
\begin{align*}
i_{\star} : \mathbb{R}_{+}\times\mathcal{S} &\rightarrow \mathbb{R}_{+}^{\mathcal{S}}\\
t\cdot \alpha &\mapsto t\cdot i\left( \alpha , \cdot \right)
\end{align*}
is an embedding. We denote the closure of the image of this map by $\mf$ and we call it the set of \textit{measured foliations}. We define the space $\pmf$ of \textit{projective measured foliations}  as the quotient of $\mf \setminus \left\lbrace 0\right\rbrace$ by the natural action of  $\mathbb{R}_+$. We denote by $\left[ F\right]$ the projective class of $F\in\mf$. It is well known that $\mf$ and $\pmf$ are respectively homoemorphic to  $\mathbb{R}^{6g-6+2n}$ and $\mathbb{S}^{6g-7+2n}$. Furthermore, it is known that the geometric intersection number can be extended to a continuous function from $\mf\times \mf$ to $\mathbb{R}_{+}$. Thus, any measured foliation $F$ can be seen as a continuous function from $\mf$ to $\mathbb{R}_{+}$. We refer to \cite{flp} for a more geometric interpretation. This interpretation allows to introduce the notion that we not recall of critical points and critical graph associated to a measured foliation. 

A measured foliation $F$ is \textit{rational} if it is determined by a system of positive real numbers $\left\lbrace w_k \right\rbrace_{1\leq i \leq k}$ and a system of distinct simple closed curves $\left\lbrace \alpha_i \right\rbrace_{1\leq i \leq k}$ such that 
$$
\forall G\in\mf, \, i\left( F , G \right) = \sum_{1\leq i \leq k}{w_i i\left( \alpha_i , G\right)}.
$$

A measured foliation $F$ is \textit{uniquely ergodic} if for any $G\in\mf$ such that $i\left( F, G \right)=0$ then $F$ and $G$ are projectively equivalent.

A pair $\left( F, G \right)$ of measured foliations is said to be \emph{transverse}  if for any $H\in\mf-\left\lbrace 0 \right\rbrace$, $i\left( F, H \right)+ i\left( G , H \right)>0$.

\subsection{Quadratic differentials}

A \textit{regular quadratic differential} $q$ on $X$ is locally the data of $q=q\left( z \right) dz^2$ such that $q\left( z \right)$ is meromorphic with poles of order at most $1$ at the marked points. Such a quadratic differential determines a pair of transverse measured foliation, $F_{v, q}$ and $F_{h,q}$ which are respectively called the \textit{vertical foliation} and the \textit{horizontal foliation} of $q$ on $X$. The critical points of these foliations correspond to zeros or poles of $q$. We denote by $\qd\left( X \right)$ the space of such a quadratic differentials on $X$. This space is equipped with an $L^1$-norm $\| \cdot \|$which is defined as follows. For any $q\in\qd\left( X \right)$, 
$$
\| q \| = \iint_{X}{\vert q \vert}.
$$
We set $\qd_{1}\left( X \right)$, the set of elements of $\qd\left( X \right)$ which are of norm $1$. 
Furthermore, for any $q\in\qd\left( X \right)$, there exists a system of local coordinates $z=x+\ii y$ on $X$ where away from the critical points of $q$, we have $q=dz^2$.  Such a system is called \emph{natural coordinates} of $q$.

It is well known that  a transverse pair of measured foliation  $\left( F, G \right)$ defines a  Riemann surface $X$ and a regular quadratic differential $q$ on $X$ where $F$ (resp. $G$) corresponds to the horizontal (resp. vertical) foliation of $q$. Such a pair determines also a point in the Teichmüller space. 

Another link between quadratic differentials and measured foliations is given by the following result of Hubbard and Masur.
\begin{theo}[\cite{hubbard&masur}]\label{hubbard&masur}
Let $X$ be a Riemann surface and $F\in\mf$. Then there exists a unique regular quadratic differential $q$ on $X$ such that $F_{h,q}=F$. 
\end{theo}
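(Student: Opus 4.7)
The plan is to consider the map $\Phi : \qd\left( X \right) \to \mf$ sending a regular quadratic differential $q$ to its horizontal measured foliation $F_{h,q}$, with $\Phi\left( 0 \right) = 0$. Since $\qd\left( X \right)$ is a real vector space of dimension $6g-6+2n$ (by Riemann-Roch applied to the twisted canonical bundle) and $\mf$ is a topological manifold of the same dimension, the strategy is to show that $\Phi$ is a continuous, proper and injective map; invariance of domain will then force $\Phi$ to be a homeomorphism, yielding both existence and uniqueness simultaneously.

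Continuity of $\Phi$ is an inspection in natural coordinates: on each rectangle away from the critical set, the leaves of $F_{h,q}$ and their transverse measure $\left| \im \sqrt{q}\, dz \right|$ depend continuously on $q$, and this passes to all intersection numbers $i\left( F_{h,q}, \alpha \right)$ with $\alpha \in \mathcal{S}$, which determine the topology of $\mf$. For properness, I would invoke Kerckhoff-Masur style estimates relating the intersection numbers $i\left( F_{h,q}, \alpha \right)$ to $\| q \|$ and the extremal lengths $\textrm{Ext}_X\left( \alpha \right)$, which give control in both directions and show that $\| q_n \| \to \infty$ if and only if $\Phi\left( q_n \right)$ leaves every compact subset of $\mf$.

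The main obstacle is injectivity. My approach would be to exploit the natural-coordinate description: if $q_1$ and $q_2$ share the horizontal foliation $F$, then in the respective natural coordinates each equals $dz^2$ on every rectangle, and the transverse measure of $F$ coincides with $\left| \im \sqrt{q_i}\, dz\right|$ for $i=1,2$. Identifying the two rectangle decompositions along the common leaves of $F$ produces a homeomorphism $\varphi : X \to X$ isotopic to the identity that is holomorphic away from the finite critical set and hence everywhere, by removable singularities; such a $\varphi$ is necessarily the identity, so $q_1 = q_2$. As a fallback (closer to the original Hubbard-Masur argument), one can instead compute the topological degree of $\Phi$ to be $\pm 1$ by examining an explicit Jenkins-Strebel differential whose horizontal foliation is rational with known critical graph, and then combine this with continuity and properness in a standard degree argument to conclude that $\Phi$ is a bijection.
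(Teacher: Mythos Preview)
The paper does not contain a proof of this statement: it is quoted from \cite{hubbard&masur} as a background result and then used without further argument (see Remarks~\ref{remarquehubbard}). There is therefore nothing in the paper to compare your attempt against.

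On the merits of your sketch: the overall plan---show that $\Phi:\qd(X)\to\mf$ is continuous, proper, and injective between manifolds of equal dimension, then invoke invariance of domain---is sound and is indeed the shape of the Hubbard--Masur argument. The injectivity step you give, however, does not work as written. Knowing $F_{h,q_1}=F_{h,q_2}$ fixes the horizontal leaves and the transverse measure $\vert\im\sqrt{q_i}\,dz\vert$, but says nothing about $\vert\re\sqrt{q_i}\,dz\vert$; when you try to ``identify the two rectangle decompositions along the common leaves of $F$'', only the $y$-coordinate is canonically matched, and there is no reason the local identifications of the $x$-coordinates patch consistently around the singular set or around nontrivial loops---this is a monodromy obstruction you have not addressed. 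Even granting a well-defined holomorphic $\varphi$, you offer no argument that it is isotopic to the identity. The standard route to injectivity is the heights theorem (Marden--Strebel): the numbers $i(F_{h,q},\alpha)$ are precisely the $q$-heights of $\alpha$, and a length--area (second main inequality) computation shows that two holomorphic quadratic differentials on the same $X$ with equal heights coincide. Your fallback degree argument is closer to what Hubbard and Masur actually do, but be aware that establishing properness and computing the degree rigorously is where the real work of \cite{hubbard&masur} lies; your one-line appeals to ``Kerckhoff--Masur style estimates'' and ``an explicit Jenkins--Strebel differential'' hide essentially the entire proof.
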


\begin{remarques}\label{remarquehubbard}\begin{enumerate}
\item We will use Theorem \ref{hubbard&masur} in the following form. Let $x=\left[X, f \right]\in\tei$ and $F\in\mf$. Then there exists a unique regular quadratic differential $q$ on $X$ such that $F_{h, q}=f_{\ast}\!\left( F \right)$. We will denote such a quadratic differential by $q_{x, F}$ or $q_{F}$ if there is no confusion. 
\item Hubbard and Masur (and Kerckhoff in \cite{kerckhoff}) proved a stronger result which says that $\mf$ is homeomorphic to $\qd\left( X \right)$ when we consider  these two spaces with the topology respectively induced by, the geometric intersection and the norm $\| \cdot \|$  . An equivalent statement is that $\pmf$ is homeomorphic to $\qd_{1}\left( X \right)$.
\end{enumerate}
\end{remarques}

\subsection{Extremal length}

For a \textit{measurable conformal metric} $\rho=\rho\left( z \right) \vert dz\vert$ on a Riemann surface $X$, we set $A_{\rho}=\iint_{X}{\rho^2}$. Furthermore, for $\alpha\in\mathcal{S}\left( X\right)$ we set $L_{\rho}\left( \alpha\right)=\inf_{\alpha^\prime}{\int_{\alpha^\prime}{\rho\left( z \right) \vert dz \vert}}$ where $\alpha^\prime$ belongs to $\alpha$.  The \textit{extremal length} of $\alpha$ on $X$ is defined as
\begin{equation}\label{defanal}
\ext_{X}\left( \alpha\right) = \sup_{\rho}{\frac{L_{\rho}\left(\alpha\right)^2}{A_{\rho}}},
\end{equation}
where $\rho$ ranges over all measurable conformal metrics such that $A_\rho \neq 0, +\infty$. We refer to \cite{ahlfors} for an introduction to this notion. This definition is called the \emph{analytic definition} of extremal length.

There exists an equivalent definition of extremal length which is as follows. Let $\alpha\in\mathcal{S}$. Then the extremal length of $\alpha$ on $X$ is 
\begin{equation}\label{defgeo1}
\ext_{X}\left( \alpha \right) = \inf \frac{1}{\Mod\left( A \right)},
\end{equation}
where $A$ ranges over all Euclidean cylinders which can be conformally embedded into $X$ and whose  image of the core curve by this embedding  is in the class $\alpha$. We recall that the modulus of a Euclidean cylinder is the ratio between the height and the circumference.

As the extremal length is a conformal invariant, we can naturally define it on Teichmüller space. Indeed, let $x=\left[ X,f \right]\in\tei$ and $\alpha\in\mathcal{S}$, we set $\ext_{x}\left( \alpha\right)=\ext_{X}\left( f\left(\alpha\right)\right)$. 

Moreover, by setting for any $t\geq 0$, $\ext_{x}\left( t\cdot\alpha\right)=t^2 \ext_{x}\left( \alpha\right)$, Kerckhoff showed in \cite{kerckhoff} that $\ext_{x}\left( \cdot \right)$ extends continuously to $\mf$ and 
\begin{equation}\label{defgeo}
\ext_{x}\left( F\right)=\| q_{x, F} \|,
\end{equation}
where $q_{x, F}$ is the unique quadratic differential on $X$ whose horizontal foliation is  $f_{\star}\left( F \right)$. Kerckhoff proved also a result now called \textit{Kerckhoff's formula}.
\begin{theo}[Theorem 4, \cite{kerckhoff}]
Let $x$, $y\in\tei$. Then 
\begin{equation}\label{kerckhoff}
d_T \left( x,y \right) = \log\sup_{\alpha\in\mathcal{S}}{\frac{\ext_{y}\left( \alpha\right)}{\ext_{x}\left( \alpha\right)}}.
\end{equation}  
\end{theo}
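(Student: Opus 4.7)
The plan is to prove the two inequalities in (\ref{kerckhoff}) separately.

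For the easy inequality $d_T(x, y) \geq \log \sup_\alpha \ext_y(\alpha) / \ext_x(\alpha)$, I would invoke the classical distortion property of extremal length under quasiconformal maps: if $h : X \to Y$ is a $K_h$-quasiconformal homeomorphism, then $\ext_Y(h(\gamma)) \leq K_h \cdot \ext_X(\gamma)$ for every $\gamma \in \mathcal{S}(X)$. This follows essentially directly from the analytic definition (\ref{defanal}) by pulling back an admissible metric for $h(\gamma)$ through $h$ and estimating its area via the Beltrami coefficient of $h$ (a Cauchy--Schwarz-type bound). Applied to any $h$ homotopic to $g \circ f^{-1}$ and to $\gamma = f(\alpha)$, one obtains $\ext_y(\alpha) \leq K_h \ext_x(\alpha)$; taking the supremum over $\alpha \in \mathcal{S}$ and then the infimum over such $h$ gives the desired inequality.

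The reverse inequality is the substantive direction, and its main ingredient --- which I expect to be the hardest part of the argument to justify --- is Teichmüller's existence theorem. It provides an extremal quasiconformal map $h^{\ast}$ in the homotopy class of $g\circ f^{-1}$ whose dilatation is exactly $K := e^{d_T(x,y)}$, of the affine form $(u,v) \mapsto (K^{1/2} u, K^{-1/2} v)$ in natural coordinates of a pair of quadratic differentials $(q_0, q_1) \in \qd(X)\times\qd(Y)$. Let $F_0 \in \mf$ be the class with $f_\ast(F_0) = F_{h, q_0}$. A direct computation in natural coordinates shows that $h^\ast$ sends the transverse measure $|dv|$ of $F_{h, q_0}$ to $K^{1/2} |d\tilde v|$ on $Y$, so that $h^\ast_{\ast}(F_{h, q_0}) = K^{1/2} F_{h, q_1}$; since rescaling a quadratic differential by $K$ rescales its horizontal foliation by $K^{1/2}$, Theorem \ref{hubbard&masur} identifies $q_{y, F_0}$ as $K \cdot q_1$. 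The Teichmüller map has unit Jacobian in natural coordinates, so $\|q_0\| = \|q_1\|$, and formula (\ref{defgeo}) then yields
\begin{equation*}
\frac{\ext_y(F_0)}{\ext_x(F_0)} \;=\; \frac{\|q_{y, F_0}\|}{\|q_0\|} \;=\; \frac{K \|q_1\|}{\|q_0\|} \;=\; K.
\end{equation*}

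To finish, I would approximate $F_0$ by weighted simple closed curves. Since $\mathbb{R}_+ \cdot \mathcal{S}$ is dense in $\mf$ by construction, and since $\ext_x$ and $\ext_y$ extend continuously to $\mf$ by Kerckhoff, I pick $t_n > 0$ and $\alpha_n \in \mathcal{S}$ with $t_n \alpha_n \to F_0$. The scale-invariance $\ext_y(t\alpha)/\ext_x(t\alpha) = \ext_y(\alpha)/\ext_x(\alpha)$ together with continuity yields $\ext_y(\alpha_n)/\ext_x(\alpha_n) \to K$, hence $\sup_{\alpha \in \mathcal{S}} \ext_y(\alpha)/\ext_x(\alpha) \geq K = e^{d_T(x, y)}$, which is the required reverse inequality. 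Apart from invoking the Teichmüller existence theorem, every step reduces to Theorem \ref{hubbard&masur}, formula (\ref{defgeo}), and the density of $\mathbb{R}_+ \cdot \mathcal{S}$ in $\mf$.
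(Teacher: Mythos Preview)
The paper does not give its own proof of this statement: it is quoted as Theorem~4 of \cite{kerckhoff} and used as a black box throughout. So there is no in-paper argument to compare yours against.

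That said, your proposal is essentially the standard proof of Kerckhoff's formula and is correct. The easy inequality is exactly the quasiconformal distortion of extremal length, and for the reverse inequality you correctly identify the Teichm\"uller existence theorem as the key input, compute the extremal-length ratio for the horizontal foliation of the initial differential using (\ref{defgeo}), and then pass to simple closed curves by density. One small caution on conventions: with the paper's Beltrami coefficient $-k\,\overline{q_F}/|q_F|$ (equation (\ref{beltrami})), the Teichm\"uller map takes the form $(u,v)\mapsto((1-k)u,(1+k)v)$ in natural coordinates, so it is the \emph{vertical} foliation of $q_F$ whose extremal length is multiplied by $K$, as recorded in (\ref{rel}). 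Your map $(u,v)\mapsto(K^{1/2}u,K^{-1/2}v)$ corresponds to the opposite sign convention (equivalently to replacing $q_0$ by $-q_0$); the argument is unaffected since all you need is \emph{some} $F_0\in\mf$ with $\ext_y(F_0)/\ext_x(F_0)=K$, but if you want to stay consistent with the paper's notation you should take $F_0$ to be the vertical foliation of the initial differential rather than the horizontal one.
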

We set  
$$
\mf_{1}=\left\lbrace F\in\mf \,\mid \, \ext_{x_{0}}\left( F \right)= 1 \right\rbrace.
$$
It can be shown that $\mf_{1}$ is homeomorphic to $\pmf$.

\subsection{Teichmüller's theorem}

Before presenting the Teichmüller theorem, we recall that for any projective measured foliation $\left[ F\right]$ and $0\leq k <1$, there exists a unique quasiconformal map $f_{k}^{\left[ F\right]} : X_0 \rightarrow f_{k}^{\left[ F\right]}\left( X_0\right)$ which is the solution of the \textit{Beltrami equation}
\begin{equation}\label{beltrami}
\partial_{\bar{z}}f =-k \frac{\overline{q_{F}}}{\vert q_{F} \vert}\partial_{z}f.
\end{equation}
 Let us recall that $f_{k}^{\left[ F\right]}$ is called the \textit{Teichmüller map} associated to $q_F$ and whose  quasiconformal dilatation is equal to $\frac{1+k}{1-k}$.

Let $x=\left[ X, g\right]\in\tei$. For any $t\geq 0$ and for any $F\in\mf$, we set 
$$
\ray_{\left[ F\right]}^{t}\left( x \right)=\left[ f_{\tanh\left( \frac{t}{2} \right)}^{\left[ F\right]}\left( X \right), f_{\tanh\left( \frac{t}{2}\right)}^{\left[ F\right]}\circ g \right].
$$ 
The Teichmüller theorem, which was first proved for closed surfaces by Teichmüller in \cite{teichmuller2} and after by Bers for most general cases in \cite{bers},  says that 
\begin{align}\label{theoremteich}
\mathbb{R}_{+}\times \pmf &\rightarrow \tei \nonumber \\
\left( t, \left[ F \right]\right) &\mapsto \begin{cases}
\ray_{\left[ F\right]}^{t}\left( x \right) &\textrm{ if } t>0 \\
0 &\textrm{ if } t=0
\end{cases}
\end{align}
is a homeomorphism.

Moreover, Teichmüller proved in \cite{teichmullermain}  that  $t\mapsto \ray_{\left[ F\right]}^{t}\left( x \right)$ is a geodesic ray (with respect to the Teichmüller distance) parametrized by arc length. We call $\left(\ray_{\left[ F\right]}^{t}\left( x \right)\right)_{t\geq 0}$ the \textit{Teichmüller ray} emanating from $x$ and directed by $\left[ F\right]$. We will use also the term \emph{Teichmüller deformation}. If $G$ represents the vertical foliation of $q_F$, then the set  $\left( \ray_{\left[ G \right]}^{t}\left( x \right)\right)_{t\geq 0} \cup \left( \ray_{\left[ F\right]}^{t}\left( x \right)\right)_{t\geq 0}$ forms a geodesic line called the \emph{Teichmüller geodesic line} through $x$ and directed by $\left( F, G \right)$. By abuse of notation we will denote it by $\left( \ray_{\left[ F\right]}^{t}\left( x \right)\right)_{t\in\mathbb{R}}$. 

Furthermore, the Teichmüller map $f_{\tanh\left( \frac{t}{2}\right)}^{\left[ F\right]}$ determines a regular quadratic differential $q_{t}$ on $f_{\tanh\left( \frac{t}{2} \right)}^{\left[ F\right]}\left( X \right)$ such that $F_{h,q_t}=e^{\frac{t}{2}}\cdot F$ and $F_{v,q_t}=e^{-\frac{t}{2}}\cdot F_{h,q_{F}}$. Thus, we have

\begin{align}\label{rel}
\begin{cases}
\ext_{\ray_{\left[ F\right]}^{t}\left( x \right)}\left( F\right) &=e^{-t}\ext_{x}\left( F\right),\\
\ext_{\ray_{\left[ F\right]}^{t}\left( x \right)}\left( F_{v,q_F} \right) &=e^{t} \ext_{x}\left( F_{v,q_F} \right).
\end{cases}
\end{align}

\subsection{Compactifications of Teichmüller space}

There exist several different compactifications of Teichmüller space which depend on which point of view we use. In this paper, we are interested in two of them: the \textit{Thurston compactification} and the \textit{Gardiner-Masur compactification}. As we shall see below, these compactifications are constructed in  similar ways. We recall that for each point $x$ in $\tei$, the hyperbolic length on $x$ of an element $\alpha\in\mathcal{S}$ is well defined. We denote this length by $l_{x}\left(\alpha\right)$. Thus, $l_{x}\left( \cdot \right)$ determines an element of $\rs$ and so we can define
\begin{equation}\label{thurston}
\Phi_{\textrm{Th}} : x\in\tei\mapsto \left[ l_{x}\left( \cdot \right)\right] \in \prs,
\end{equation}
where $\prs = \quot{\rs\setminus\left\lbrace 0 \right\rbrace}{\mathbb{R}_{>0}}$. Thurston showed that $\Phi_{\textrm{Th}}$ is an embedding whose  image is relatively compact. We denote the closure of this image by $\teith$ and we call it the Thurston compactification of $\tei$. An important fact is that the boundary of the closure  is exactly $\pmf$. We refer to \cite{flp} for more details. 

In the same way, we define 
\begin{equation}\label{gm}
\Phi_{\textrm{GM}} : x\in\tei\mapsto \left[ \ext_{x}^{\frac{1}{2}}\left( \cdot \right)\right] \in \prs.
\end{equation}
Gardiner and Masur showed in \cite{gm} that $\Phi_{\textrm{GM}}$ is also an embedding with relatively compact image. The closure  of the image is denoted by $\teigm$ and called the Gardiner-Masur compactification of $\tei$. Gardiner and Masur also showed  that if $\dim_{\mathbb{C}}\tei = 3g-3+n \geq 2$, then $\pmf \subsetneq \teigmb$. Miyachi proved in \cite{miyachiproc} that in the case of the once-punctured torus,  these two boundaries are the same. He also showed in \cite{miyachi1} that each point in the Gardiner-Masur boundary can be represented by a continuous function from $\mf$ to $\mathbb{R}_{+}$. Indeed, if we set for each $y\in\tei$,
\begin{equation}
\epsilon_{y} : F\in\mf \mapsto \left( \frac{\ext_{y}\left( F\right)}{e^{d_{T}\left( x_0 , y \right)}} \right)^\frac{1}{2},
\end{equation}
then  $\left(\epsilon_{y}\right)_{y\in\tei}$ forms a normal family and  we have the following statement.
\begin{theo}[\cite{miyachi1}, Theorem 1.1]\label{pointbordgm}
Let $p\in\teigmb$. Then there exists a unique continuous map $\epsilon_{p} : \mf \rightarrow \mathbb{R}_{+}$ such that
\begin{enumerate}
\item $\epsilon_{p}$ represents the point p,
\item $\max_{F\in\mf_1}{\epsilon_{p}\left( F\right)}=1$,
\item if $y_{n}$ converges to $p$, then $\epsilon_{y_n}$ converges uniformly to $\epsilon_p$  on any compact set of $\mf$.
\end{enumerate}
\end{theo}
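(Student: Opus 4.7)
The plan is to extract $\epsilon_{p}$ by a normal family argument: show that $\left( \epsilon_{y}\right)_{y\in\tei}$ is equicontinuous and locally uniformly bounded on $\mf$, so that Arzelà-Ascoli yields sub-sequential limits along any sequence $y_{n}\to p$, and then identify those limits with the unique representative of $p$ pinned down by the normalization condition $(2)$.

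First, I would use Kerckhoff's formula \eqref{kerckhoff}, extended by continuity from $\mathcal{S}$ to $\mf$, to obtain the uniform bound $\ext_{y}\left( F\right) \leq e^{d_T\left( x_0, y \right)} \ext_{x_0}\left( F \right)$ for every $F\in\mf$ and every $y\in\tei$. This yields $\epsilon_{y}\left( F\right) \leq \ext_{x_0}\left( F \right)^{1/2}$ uniformly in $y$, hence local uniform boundedness on $\mf$. For local equicontinuity I would aim for a Hölder-type estimate of the form $\left\vert \ext_{y}\left( F\right)^{1/2} - \ext_{y}\left( G \right)^{1/2} \right\vert \leq C\left( F, G\right)$ with $C$ independent of $y$, obtained from the conformal definition \eqref{defanal} of extremal length together with continuity estimates on the Hubbard-Masur correspondence $F \mapsto q_{y,F}$ provided by Theorem \ref{hubbard&masur} and Remark \ref{remarquehubbard}. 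Arzelà-Ascoli then extracts a subsequence $\epsilon_{y_{n_k}}$ converging uniformly on compact subsets of $\mf$ to a continuous limit $\epsilon_{p} : \mf \rightarrow \mathbb{R}_{+}$.

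Next, I would check that $(2)$ holds along the entire family: for $F\in\mf_{1}$ one has $\ext_{x_0}\left( F \right) =1$, and rescaling each nonzero element of $\mf$ into $\mf_{1}$ identifies $\sup_{F\in\mf_{1}} \ext_{y}\left( F \right)$ with $\sup_{F\in\mf\setminus\left\lbrace 0 \right\rbrace} \ext_{y}\left( F \right) / \ext_{x_0}\left( F \right)$, which equals $e^{d_T\left( x_0, y \right)}$ by Kerckhoff's formula (extended by density from $\mathcal{S}$ to $\mf$). Consequently $\max_{F\in\mf_{1}}\epsilon_{y}\left( F \right)^{2} = 1$ for every $y\in\tei$, and since $\mf_{1}$ is compact (being homeomorphic to $\pmf$), this identity passes to the uniform limit, so $\epsilon_{p}$ satisfies $(2)$ and in particular is not identically zero. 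Because $\epsilon_{y_{n}}$ is a positive scalar multiple of $\ext_{y_{n}}^{1/2}$, its projective class in $\prs$ coincides with $\Phi_{\textrm{GM}}\left( y_{n}\right)$, which tends to $p$ by hypothesis; hence $\epsilon_{p}$ represents $p$, giving $(1)$.

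Uniqueness is then immediate: two continuous nonnegative representatives of $p$ differ by a positive scalar, and $(2)$ fixes that scalar. Because every sub-sequential limit of $\left( \epsilon_{y_{n}}\right)$ satisfies $(1)$ and $(2)$ and therefore coincides with $\epsilon_{p}$, the full sequence $\left( \epsilon_{y_{n}}\right)$ converges uniformly on compact sets of $\mf$ to $\epsilon_{p}$, which is $(3)$. The main obstacle will be the equicontinuity step: the uniform bound on the family is essentially formal from Kerckhoff's formula, but producing a modulus of continuity on $\mf$ that is independent of $y\in\tei$ requires substantive control on how the extremal length and the associated Hubbard-Masur differentials vary with the foliation, and is the technical heart of the argument.
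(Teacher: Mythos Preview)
The paper does not prove this theorem; it is quoted verbatim as Theorem~1.1 of Miyachi \cite{miyachi1} and used as a black box throughout (for instance in the proof of Lemma~\ref{lemmebidon} and in Section~4). So there is no ``paper's own proof'' to compare your attempt against.

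That said, your overall architecture---Arzel\`a--Ascoli on the family $(\epsilon_{y})_{y}$, identification of subsequential limits via the normalization~(2), and uniqueness forcing convergence of the full sequence---is exactly the shape of Miyachi's argument in \cite{miyachi1}. The uniform bound $\epsilon_{y}(F)\le \ext_{x_{0}}(F)^{1/2}$ from Kerckhoff's formula, the computation $\sup_{F\in\mf_{1}}\epsilon_{y}(F)=1$, and the uniqueness/full-convergence steps are all correct as you state them.

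The one place where your sketch is genuinely soft is the equicontinuity step, and your proposed route through ``H\"older estimates on the Hubbard--Masur correspondence $F\mapsto q_{y,F}$'' is not how Miyachi does it and is unlikely to work as written: that correspondence depends on $y$, and there is no obvious reason its modulus of continuity should be uniform as $y$ leaves every compact set of $\tei$. Miyachi instead obtains equicontinuity from an inequality intrinsic to extremal length (essentially a Lipschitz-type estimate for $\ext_{y}^{1/2}$ with respect to a metric on $\mf$ that does not degenerate with $y$), not from regularity of the Hubbard--Masur section. You correctly flag this as the technical heart; if you want to fill it in, look at Miyachi's Lemma~3 and the surrounding estimates in \cite{miyachi1} rather than trying to control $q_{y,F}$ directly.
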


For more details about this compactification, we refer to \cite{miyachisurvey}.

In order to distinguish the convergence in these two compactifications, we write $\cvgth$ and $\cvggm$. Furthermore, from the Kerckhoff formula and Theorem \ref{pointbordgm}, we get the following.

\begin{lemme}\label{lemmebidon}
Let $\left( y_n \right), \left( z_n \right) \subset \tei$ such that $y_n \cvggm p$ and $z_n \cvggm q$. Then 
$$
d_{T}\left( y_n , z_n \right) \rightarrow 0 \Longrightarrow p=q.
$$
\end{lemme}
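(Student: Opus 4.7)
The plan is to compare the functions $\epsilon_{y_n}$ and $\epsilon_{z_n}$ directly, show that their ratio tends to $1$ uniformly in $F\in\mf$, and then invoke the uniqueness part of Theorem \ref{pointbordgm} to conclude $\epsilon_p = \epsilon_q$, hence $p=q$.

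First I would use Kerckhoff's formula \eqref{kerckhoff}. Since $d_T$ is symmetric, it gives for every $\alpha \in \mathcal{S}$
\[
\left|\log \frac{\ext_{z_n}(\alpha)}{\ext_{y_n}(\alpha)}\right| \le d_T(y_n, z_n).
\]
The ratio $\ext_{z_n}(\alpha)/\ext_{y_n}(\alpha)$ is invariant under the scaling $\alpha \mapsto t\alpha$, so by density of $\mathbb{R}_{+}\cdot \mathcal{S}$ in $\mf$ and continuity of extremal length, the same bound persists for every $F \in \mf \setminus \{0\}$. Combining this with the elementary estimate $|d_T(x_0, y_n) - d_T(x_0, z_n)| \le d_T(y_n, z_n)$ coming from the triangle inequality, and with the very definition of $\epsilon_y$, I obtain
\[
\left|\log \frac{\epsilon_{z_n}(F)}{\epsilon_{y_n}(F)}\right| \le d_T(y_n, z_n) \longrightarrow 0,
\]
uniformly in $F\in\mf \setminus \{0\}$.

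Next, by item (3) of Theorem \ref{pointbordgm}, $\epsilon_{y_n} \to \epsilon_p$ and $\epsilon_{z_n} \to \epsilon_q$ uniformly on every compact subset of $\mf$. Passing to the limit shows $\epsilon_p(F) = \epsilon_q(F)$ wherever $\epsilon_p(F) > 0$; at the remaining points, the one-sided bound $\epsilon_{z_n}(F) \le e^{d_T(y_n,z_n)} \epsilon_{y_n}(F)$ forces $\epsilon_q(F) = 0$ as well. Hence $\epsilon_p \equiv \epsilon_q$ on $\mf$. If both limits are boundary points, the uniqueness of the representative in Theorem \ref{pointbordgm} gives $p = q$; if either limit lies in $\tei$, then since the GM topology restricts to the usual Teichmüller topology on $\tei$, the hypothesis $d_T(y_n, z_n) \to 0$ propagates the convergence and again $p = q$. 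I do not anticipate a serious obstacle: the only mildly delicate points are the extension of Kerckhoff's inequality from $\mathcal{S}$ to $\mf$ (which is routine by continuity and homogeneity) and the treatment of foliations where $\epsilon_p$ vanishes, for which one must use the one-sided squeezing rather than the ratio argument.
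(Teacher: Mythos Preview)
Your proof is correct and follows essentially the same route as the paper: Kerckhoff's formula plus the triangle inequality to compare $\epsilon_{y_n}$ and $\epsilon_{z_n}$, then Theorem~\ref{pointbordgm} to identify the limits. The paper's version is marginally tidier in that it works only with the one-sided inequality $\epsilon_{y_n}^{2}(\alpha)\le \epsilon_{z_n}^{2}(\alpha)\,e^{d_T(y_n,z_n)}\,e^{d_T(x_0,z_n)-d_T(x_0,y_n)}$, passes to the limit to get $\epsilon_p(\alpha)\le\epsilon_q(\alpha)$, and then invokes symmetry of $d_T$ for the reverse inequality; this avoids the case split you need for the points where $\epsilon_p$ vanishes, and it stays on $\mathcal{S}$ rather than extending to $\mf$ (which suffices since the Gardiner--Masur embedding is into $\prs$).
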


\begin{proof}
If $p$ or $q$ belongs to $\tei$ then the proof is obvious. Suppose that $p, q\in\teigmb$. According to Miyachi's result (Theorem \ref{pointbordgm}), it suffices to show that $\epsilon_{p}=\epsilon_{q}$.  

By Kerckhoff's formula we have for all $n\in\mathbb{N}$ and for all $\alpha\in\mathcal{S}$,
$$
0\leq \epsilon_{x_n}^{2}\left( \alpha\right)\leq \epsilon_{y_n}^{2}\left( \alpha\right)e^{d_{T}\left( x_n , y_{n}\right)}e^{d_{T}\left( x_0 , y_n \right) -d_{T}\left( x_0 , x_n \right)}.
$$
Thus, when $n$ tends to $+\infty$ we obtain
$$
\epsilon_{p}\left( \alpha \right) \leq \epsilon_{q}\left( \alpha\right).
$$
As the Teichmüller distance is symmetric, we have the reverse inequality and so for any $\alpha\in\mathcal{S}$, 
$$
\epsilon_{p}\left(\alpha\right) = \epsilon_{q}\left( \alpha\right).
$$
The lemma is now proved.
\end{proof}

If we only suppose in Lemma \ref{lemmebidon} that the distance is bounded, we do not necessarily have $p=q$. Indeed, Masur proved in \cite{masurcontreexemple}  that two Teichmüller rays starting at the same point are bounded if they are directed by topologically the same rational foliation. But Miyachi gave in \cite{miyachi1} the limit of such a rays in the Gardiner-Masur boundary. In particular, when the foliations are given by at least two disjoint simple closed curves, then the limits are distinct if the foliations are not projectively equivalent. An explicit expression of such a limit is given by Relation (\ref{limitrationnel}) below.  

Moreover, in this form, the converse of this lemma is wrong. It suffices to set for any measured foliation $F$, $y_n = \ray_{\left[F \right]}^{n^2}\left( x_0 \right)$ and $z_n = \ray_{\left[F \right]}^{n}\left( x_0 \right)$. Note that Liu and Su proved in \cite{liu&su} that any Teichmüller ray converges in the Gardiner-Masur boundary. We also refer to \cite{miyachi2} and \cite{miyachisurvey}. 

However, about the converse of this lemma, we can ask:
\begin{question}\label{question1}
Can we find two sequences $\left(y_n \right)$ and $\left( z_n \right)$ in $\tei$ such that
\begin{itemize}
\item $y_n \underset{n\rightarrow +\infty}{\cvggm} p$ and $z_n \underset{n\rightarrow +\infty}{\cvggm} p$, where $p\in\teigmb$,
\item $d_{T}\left( y_n , x_0 \right) / d_{T}\left( z_n , x_0 \right) \underset{n\rightarrow +\infty}{\longrightarrow} 1$,
\item $d_T \left( y_n , z_n \right) \underset{n\rightarrow +\infty}{\nlongrightarrow} 0$?
\end{itemize} 
\end{question}
A positive answer will be given below by considering  Teichmüller discs.

\section{Horocyclic deformation}

\subsection{Teichmüller disc}

We start this subsection by recalling the notion of  Teichmüller discs and their known propertiest. Let $x= \left[ X, f \right]\in\tei$ and $F\in\mf$. By Theorem \ref{hubbard&masur} and Remark \ref{remarquehubbard}, we can associate to $F$ a unique regular quadratic differential $q$ on $X$ whose horizontal foliation is $f_{\ast}\left( F \right)$.  It is well known that 
\begin{align}\label{teichmullerdisc}
\imath_{\left( x, \left[ F \right]\right)} \, :\,  \mathbb{D} &\rightarrow \tei \\ r\cdot e^{\ii \theta}&\mapsto \ray_{\left[ F_{h, e^{-\ii\theta}q}\right]}^{2\tanh^{-1}\left( r \right)}\left( x \right)
\end{align}
is an isometric embeding, when we consider the Poincaré metric on $\dd$.  We denote by $\dd\left( x, \left[ F \right]\right)$ the image of $\dd$ by $\imath_{\left( x, \left[ F \right]\right)}$ and we call it the \textit{Teichmüller disc} associated to $\left( x, \left[ F \right] \right)$. Note that the notion of Teichmüller disc already appeared in the most famous Teichmüller paper \cite{teichmullermain} under the name ``complex geodesic'' (see §161).  As the upper half-plane is biholomorphic to the unit disc, we shall consider $\mathbb{H}$ instead of $\dd$.

There exists another point of view on the Teichmüller disc which is more geometric. The point $x\in\tei$ is determined by the transverse pair $\left( f_{\ast}\left( F\right), F_{v, q}\right)$. Such a pair gives a system of coordinates which are natural coordinates for $q$. An element of $\quot{\sl2}{\so2}$ acts on such a coordinates and defines a new transverse pair of measured foliation and so a new point in the Teichmüller space. Furthermore, $\quot{\sl2}{\so2}$ is isomorphic to the upper half plane  and the orbit of $x$ by this group is the Teichmüller disc $\dd\left( x, \left[F \right] \right)$. For more details, we refer to \cite{schmithusen&herrlich}. 

We  deduce from this second point of view the following elementary result.
\begin{lemme}\label{lemmedisque}
Let $x$, $y \in\tei$ and $F\in\mf$. If $y\in \dd\left( x, \left[ F\right]\right)$, then $\dd\left( x, \left[ F\right]\right)$ and $\dd\left( y, \left[ F\right]\right)$ are identical up to an automorphism of the disc.
\end{lemme}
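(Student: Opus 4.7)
The plan is to rely on the geometric second point of view on Teichmüller discs recalled just before the statement: $\dd\left( x, \left[ F\right]\right)$ is the projection to $\tei$ of the $\sl2$-orbit of the flat surface $\left( X, q_{x, F}\right)$, with the subgroup $\so2$ acting trivially on this projection. In this language, showing that $\dd\left( x, \left[ F\right]\right)$ and $\dd\left( y, \left[ F\right]\right)$ are identical as subsets of $\tei$ is equivalent to showing that the pairs $\left( X, q_{x, F}\right)$ and $\left( Y, q_{y, F}\right)$ lie in the same $\sl2$-orbit. The qualification ``up to an automorphism of the disc'' is then automatic, because any two isometric holomorphic parameterizations of a Teichmüller disc by $\dd$ differ by an isometry of the Poincaré disc.

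Since $y\in\dd\left( x, \left[ F\right]\right)$, I would pick $A\in\sl2$ such that $A\cdot\left( X, q_{x, F}\right)=\left( Y, \tilde{q}\right)$ realizes $y$ together with the marking $g$ of $Y$ induced from the marking $f$ of $x$ by the flat deformation. In general $\tilde{q}\neq q_{y, F}$: the latter is characterized by Theorem \ref{hubbard&masur} as the unique regular quadratic differential on $Y$ whose horizontal foliation is $g_{\ast}\left( F\right)$, whereas the horizontal foliation of $\tilde{q}$ comes from applying the linear map $A$ to the horizontal direction of $q_{x,F}$.

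The crucial step is the observation that, when read in the natural coordinates of $\tilde{q}$ on $Y$, the foliation $g_{\ast}\left( F\right)$ is nothing but the image under $A$ of the horizontal lines of $q_{x, F}$, and hence is a foliation by parallel straight lines of a definite slope with its pushforward transverse measure. Any such straight-line foliation on $Y$ is the horizontal foliation of a quadratic differential of the form $c\,e^{i\theta}\tilde{q}$ with $c>0$ and $\theta\in\mathbb{R}$, so by the uniqueness part of Theorem \ref{hubbard&masur} one must have $q_{y, F}= c\, e^{i\theta}\tilde{q}$. Since $e^{i\theta}\in\so2\subset\sl2$ and positive rescaling does not alter the $\sl2$-orbit, this places $q_{y, F}$ in the $\sl2$-orbit of $q_{x, F}$, which yields the desired equality of discs. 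The main effort lies in this coordinate identification of $g_{\ast}\left( F\right)$ as a rotated horizontal foliation of $\tilde{q}$; once it is in place, the conclusion is formal from Hubbard-Masur together with the $\sl2/\so2$ description of the disc.
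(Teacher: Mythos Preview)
Your argument is correct and reaches the same conclusion as the paper, but along a slightly different path. The paper exploits the Iwasawa decomposition: rather than taking a general $A\in\sl2$, it writes the element carrying $x$ to $y$ as a product of a diagonal matrix and an upper unipotent matrix (Relation (\ref{matricedisque})). Upper-triangular matrices send horizontal lines to horizontal lines, so the induced quadratic differential $\tilde{q}$ on $Y$ already has horizontal foliation projectively equal to $g_{\ast}\!\left(F\right)$; no rotation $e^{\ii\theta}$ appears, and one obtains directly $q_{y,F}\propto\tilde{q}$. Applying the inverse upper-triangular matrix shows $x\in\dd\left(y,\left[F\right]\right)$, and the paper then closes by quoting Lemma~2.1 of \cite{marden&masur}.

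Your route trades the Iwasawa shortcut for a direct orbit argument: you allow a general $A$, accept that $q_{y,F}$ may differ from $\tilde{q}$ by a phase $e^{\ii\theta}$ and a positive scalar, and observe that the phase is absorbed by $\so2$ while the scalar does not affect the image in $\tei$. This is self-contained and avoids the external citation. Two minor points of phrasing: $e^{\ii\theta}$ is not literally an element of $\so2$ --- multiplication of the quadratic differential by $e^{\ii\theta}$ corresponds to rotating natural coordinates by $\theta/2$, and it is this rotation that lies in $\so2$; and positive rescaling of $q$ does change the $\sl2$-orbit in the space of flat surfaces, though not its projection to $\tei$, which is all you need.
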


Even if this result is well known,  we sketch a proof.

\begin{proof}
As $y\in\dd\left( x, \left[ F \right] \right)$, there exists a pair $\left( s, t \right)\in\mathbb{R}^2$ such that $y$ is determined by 
\begin{equation}\label{matricedisque}
\begin{pmatrix}
1 & s \\ 0 & 1
\end{pmatrix}
\cdot \begin{pmatrix}
e^{-\frac{t}{2}} & 0 \\ 0 & e^{\frac{t}{2}}
\end{pmatrix}.
\end{equation}
These two matrices which act on natural coordinates, preserve (projectively) the measured foliation $F$, and so they determine a regular quadratic differential on $y$ whose the horizontal foliation is projectively the same as $F$. Using the inverse matrix of (\ref{matricedisque}) on these new natural coordinates, we obtain $x$ and so $x\in\dd\left( y , \left[ F \right] \right)$ and by Lemma 2.1 of \cite{marden&masur} we complete the proof. 
\end{proof}

From the proof of Lemma \ref{lemmedisque}, we observe  that for any $t\in\mathbb{R}$, $\ray_{\left[ F \right]}^{t}\left( \cdot \right)$ is identified with the diagonal matrix of (\ref{matricedisque}), and so it  preserves $\dd\left( x, \left[F \right]\right)$. Thus, by pulling back the Teichmüller disc to $\mathbb{H}$, we can consider this  Teichmüller ray as a map from $\mathbb{H}$ to $\mathbb{H}$ such that for any $t\in\mathbb{R}$ and any $z= x+ \ii y \in\mathbb{H}$,
\begin{equation}\label{expressionray}
\ray_{\left[ F \right]}^{t}\left( z \right)= x+\ii e^{t}y.
\end{equation}

The parabolic element in (\ref{matricedisque}) corresponds up to normalization to what we shall call the horocyclic deformation directed by $F$. A study of such a deformations in the Teichmüller space is done in the rest of this note.

\subsection{Horocyclic deformation} 

We define the \textit{horocyclic deformation} as follows.
\begin{definition}\label{horo}
Let $t\in\mathbb{R}$ and $F\in\mf$. The horocyclic deformation directed by $F$ of parameter $t$ is
\begin{align*}
\horo_{\left[ F\right]}^{t} \, :\, \tei &\rightarrow \tei  \\
x &\mapsto \imath_{\left( x, \left[ F \right]\right)} \left( k_t e^{\ii\theta_t}\right),
\end{align*}
where $k_t =\frac{1}{\sqrt{1+\frac{4\ext_{x_0}\left( F \right)^2}{t^2 \ext_{x}\left( F \right)^2}}}$ and $\theta_t =\arctan\left( \frac{2\ext_{x_0}\left( F \right)}{t\ext_{x}\left( F \right)}\right)$.
\end{definition}

We observe that for a fixed real number $t$, the  horocyclic deformation depends only on the projective class of the given  measured foliation. Thus, we can suppose that the foliation $F$ belongs to $\mf_1$. 

As for Teichmüller rays, by pulling back $\dd\left( x, \left[ F \right]\right)$ to $\mathbb{H}$, one can check with our normalization that for any $s\in\mathbb{R}$,
\begin{equation}\label{expressionhoro}
\horo_{\left[ F \right]}^{s}\left( \ii \right)= \ii -s\cdot \ext_{x}\left( F \right).
\end{equation}
Thus, the image  of $t\in\mathbb{R}\mapsto\horo_{\left[ F \right]}^{t}\left( x \right)$ coincides with the image by $\imath_{\left( x, \left[F \right] \right)}$ of a certain horocycle.

Moreover, as for any point in $\dd\left( x, \left[ F \right]\right)$, the Teichmüller ray at this point directed by $F$ stays in this disc, we  deduce  that for any $s\in\mathbb{R}$, $\horo_{\left[ F \right]}^{t}\left( \cdot \right)$ preserves the associated Teichmüller disc. 

As for the Teichmüller line, we can give an explicit expression of the action on the upper half-plane by  the horocyclic deformation. It suffices to conjugate it by an appropriate automorphism in order to bring back the problem  in $\ii$. We can also deduce from Relation (\ref{expressionhoro}) (and even from the definition) that for any point $x\in\tei$ and for any $F\in\mf_1$ the map $s\in\mathbb{R}\mapsto \horo_{\left[ F \right]}^{s}\left( x \right)$ is continuous. 

In the case where $F$ is a simple closed curve $\alpha$, it is important to note that for any point $x\in\tei$, $\left( \horo_{\left[ \alpha\right]}^{n}\left( x \right)\right)_{n\in\mathbb{Z}}$ corresponds to the orbit of $x$ by the group generated by the Dehn twist along $\alpha$. Such a Dehn twist is denoted by $\tau_{\alpha}$. This fact  was  observed by Marden and Masur in \cite{marden&masur} and it will be used below. Marden and Masur also gave  a description of $\horo_{\left[ \alpha\right]}^{t}\left( x \right)$ when $t$ is real. Such points are described by what we call \emph{conformal twist}  along $\alpha$ of parameter $t$.

As we shall see below, the horocycle deformation has some similarities with the earthquake map.

\subsection{Elementary properties}

The first property which can be seen as an analogue of a  theorem of Thurston (see Theorem 2 in \cite{kerckhoffnielsen} for the statement and a proof) is the existence of a certain horocyclic deformation between any two points of the Teichmüller space. The statement is the following.

\begin{prop}
Let $x$ and $y$ be two distinct points in $\tei$. Then there exists a unique $F\in\mf_1$ and a unique $s>0$ such that
$$
y= \horo_{\left[ F \right]}^{s}\left( x \right).
$$
\end{prop}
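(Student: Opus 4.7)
The strategy is to reduce the problem to a computation inside the unique Teichmüller disc through $x$ and $y$, and then to analyze which horocycles of that disc pass through both points with a positive horocyclic parameter.

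Teichmüller's theorem (\ref{theoremteich}) provides a unique Teichmüller geodesic from $x$ to $y$; let $[G]\in\pmf$ denote its direction at $x$ and write $y=\ray_{[G]}^T(x)$ with $T>0$. The disc $\dd:=\dd(x,[G])$ contains both $x$ and $y$, and I would first argue that it is the only Teichmüller disc with this property: any disc $\dd(x,[F])$ passing through $y$ contains, by the isometric nature of $\imath_{(x,[F])}$, the whole Teichmüller geodesic from $x$ to $y$, so $[G]$ is realized among the directions on the boundary circle of $\dd(x,[F])$. Since those directions are exactly the classes $[F_{h,e^{-\ii\theta}q_{x,F}}]$, the differentials $q_{x,F}$ and $q_{x,G}$ differ by a non-zero complex scalar, whence $\dd(x,[F])=\dd(x,[G])$---essentially Lemma~2.1 of \cite{marden&masur}, as already invoked in Lemma~\ref{lemmedisque}.

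I would then parametrize $\dd$ by $\mathbb{H}$ via $\imath_{(x,[G])}$, so that $x\leftrightarrow \ii$ and, by (\ref{expressionray}), $y\leftrightarrow \ii e^T$. Finding a pair $([F],s>0)$ with $y=\horo_{[F]}^s(x)$ then amounts to choosing a boundary point $p\in\partial\mathbb{H}$, corresponding to $[F]$ via this parametrization, whose horocycle through $\ii$ also passes through $\ii e^T$, with positive horocyclic parameter at that point. A direct Euclidean computation shows that exactly two horocycles of $\mathbb{H}$ pass through $\ii$ and $\ii e^T$, namely the circles tangent to $\mathbb{R}$ based at $p=\pm e^{T/2}$. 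To decide between them, I would conjugate by the Möbius automorphism $\phi$ of $\mathbb{H}$ fixing $\ii$ and sending $p$ to $\infty$; in this new coordinate the horocycle becomes $\{\im z=1\}$ and, by (\ref{expressionhoro}), the horocyclic deformation reads $\ii \mapsto \ii -s\,\ext_x(F)$, so $s>0$ corresponds to a negative real displacement from $\ii$. A short calculation yields $\phi(\ii e^T)=\mp(e^{T/2}-e^{-T/2})+\ii$, whose real part is negative for $p=e^{T/2}$ and positive for $p=-e^{T/2}$. This singles out $p=e^{T/2}$, which determines a unique $[F]\in\pmf$, hence a unique $F\in\mf_1$, and then $s=(e^{T/2}-e^{-T/2})/\ext_x(F)>0$ is the unique positive horocyclic parameter.

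The main obstacle is establishing the uniqueness of the Teichmüller disc through two points in the first step; once this reduction is in hand, the rest is an elementary calculation in $\mathbb{H}$ using (\ref{expressionhoro}) and the normalization chosen in Definition~\ref{horo}.
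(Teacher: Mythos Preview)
Your approach is the same as the paper's: use Teichmüller's theorem to place $x$ and $y$ on a Teichmüller geodesic and then rotate the associated quadratic differential to obtain the horocyclic direction. The paper's proof is a two-line sketch (``consider $e^{-\ii\tau}q_G$ for some $\tau$'') that addresses only existence, whereas you carry out the disc computation explicitly and supply the uniqueness argument---uniqueness of the Teichmüller disc through $x$ and $y$, and the fact that of the two horocycles through $\ii$ and $\ii e^{T}$ only the one based at $p=e^{T/2}$ yields a positive parameter---which the paper leaves implicit.
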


\begin{proof}
By Theorem \ref{theoremteich},  there exists a unique $G\in\mf_1$ and a unique $s>0$ such that $y=\ray_{\left[ G \right]}^{s}\left( x \right)$. Thus, we just have to consider $e^{-\ii \tau} \cdot q_{G}$ for a some $\tau$ and set $F=F_{v,e^{-\ii \tau}q_{G} }$. 
\end{proof}

As the horocyclic deformation and the Teichmüller deformation preserve the Teichmüller disc for a given foliation, we can state some elementary results. 

\begin{prop}\label{commute}
Let $F\in\mf$. Then for any $s\in\mathbb{R}$ and any $t\in\mathbb{R}$ we have
$$
\horo_{\left[ F \right]}^{s}\circ\ray_{\left[ F \right]}^{t}=\ray_{\left[ F \right]}^{t}\circ \horo_{\left[ F \right]}^{s}.
$$
\end{prop}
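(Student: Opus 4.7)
\emph{Approach.} The plan is to reduce the identity to a computation inside a single Teichmüller disc. Fix $z \in \tei$. Since both $\ray^t_{[F]}$ and $\horo^s_{[F]}$ preserve the disc $\dd(z,[F])$, both sides of the claimed equality, evaluated at $z$, lie in this disc. I work in the chart $\imath_{(z,[F])}: \mathbb{H} \to \dd(z,[F])$; under this identification, formula (\ref{expressionray}) gives the \emph{global} expression $\ray^t_{[F]}(w) = \re(w) + i e^t \im(w)$ for any $w \in \mathbb{H}$, while formula (\ref{expressionhoro}) gives $\horo^s_{[F]}(z)$ as $i - s\,\ext_z(F)$ at the distinguished point $i$.

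\emph{Key steps.} The first composition is then immediate: applying the global formula for $\ray^t_{[F]}$ to $i - s\,\ext_z(F)$, I find that $\ray^t_{[F]}\circ\horo^s_{[F]}(z)$ corresponds to $-s\,\ext_z(F) + i e^t$. For the reverse composition, set $y = \ray^t_{[F]}(z)$, which corresponds to $i e^t$. By Lemma \ref{lemmedisque} the disc $\dd(y,[F])$ coincides with $\dd(z,[F])$, but the parametrization $\imath_{(y,[F])}$ differs from $\imath_{(z,[F])}$ by an automorphism $\varphi$ of $\mathbb{H}$. I identify $\varphi$ by two normalizations it must satisfy: it sends $i$ (the base in the $y$-chart) to $i e^t$ (the coordinate of $y$ in the $z$-chart), and it must fix $\infty$, since in both charts the vertical axis parametrizes the Teichmüller geodesic in the direction of $F$. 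Hence $\varphi(w) = e^t w$. Applying (\ref{expressionhoro}) in the $y$-chart gives $\horo^s_{[F]}(y) = i - s\,\ext_y(F)$, and transporting through $\varphi$ turns this into $i e^t - e^t s\,\ext_y(F)$. Using the Teichmüller ray's extremal-length relation (\ref{rel}), namely $\ext_y(F) = e^{-t}\ext_z(F)$, the exponential factors cancel and I obtain $-s\,\ext_z(F) + i e^t$, which matches the previous expression.

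\emph{Main obstacle.} The subtle point is the recalibration of $\horo^s_{[F]}$ when applied at $y$ rather than at $z$: according to Definition \ref{horo}, the horocyclic step at any point uses the extremal length at that point. Correctly identifying the change-of-chart automorphism as $\varphi(w) = e^t w$ and combining it with the scaling $\ext_{\ray^t(z)}(F) = e^{-t}\ext_z(F)$ is the crux of the argument. The cancellation of the two factors $e^t$ and $e^{-t}$ is precisely what the $\ext_z(F)$-calibration in Definition \ref{horo} is designed to produce, making the commutation manifest.
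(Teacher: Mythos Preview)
Your proof is correct and follows essentially the same approach as the paper: compute both compositions in the upper half-plane model of the Teichm\"uller disc using (\ref{expressionray}) globally and (\ref{expressionhoro}) at the base point, then handle the second composition by changing from the $z$-chart to the $y$-chart via the automorphism $w\mapsto e^t w$ and invoking (\ref{rel}). The paper phrases this last step as ``conjugating the horocyclic deformation by $z\mapsto e^{-t}z$'', while you make the identification of the automorphism more explicit (via its action on $i$ and on the endpoint $\infty$ of the $F$-geodesic); the underlying argument is the same.
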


\begin{proof}
Let $x\in\tei$. We fix $\left( s, t \right)\in\mathbb{R}^2$.  As the transformations that we consider preserve the Teichmüller disc $\dd\left( x, \left[ F \right]\right)$, we will do computations in the upper half-plane. Thus, $x$ corresponds to $\ii$. From Relations (\ref{expressionray}) and (\ref{expressionhoro}), we get
$$
\ray_{\left[ F \right]}^{t}\left(\horo_{\left[ F \right]}^{s}\left( \ii \right)\right)=-s\cdot\ext_{x}\left( F \right) + \ii\cdot e^{t}.
$$
On the other hand, using Relation (\ref{rel}) and conjugating the horocyclic deformation of $\ray_{\left[ F \right]}^{t}\left( \ii \right)$ by $z\mapsto e^{-t}\cdot z$, we get
\begin{align*}
\horo_{\left[ F \right]}^{s}\left( \ray_{\left[ F \right]}^{t}\left( \ii \right) \right) &= e^{t}\cdot \left( \ii - s\cdot\ext_{\ray_{\left[ F \right]}^{t}\left( x \right)}\left( F \right) \right) \\
&=  - s \cdot \ext_{x}\left( F \right) + \ii e^{t}.
\end{align*}
The proof is complete.
\end{proof}

Note that this result is analogous to a result of Théret in \cite{theret} where he proves that the operation of earthquake and that  of stretching commute if their directions are the same. 

\begin{remarque}
If the directions for the Teichmüller deformation and for the horocyclic deformation are not the same, we do not have necessarily Property \ref{commute}. Indeed, let $\alpha$ and $\beta$ be two distinct simple closed curves  such that $i\left( \alpha , \beta\right)\neq 0$. Assume that for any $s, t \in\mathbb{R}$, 
$$
\ray_{\left[ \alpha \right]}^{t}\circ \horo_{\left[ \beta\right]}^{s}=\horo_{\left[\beta\right]}^{s}\circ\ray_{\left[\alpha\right]}^{t}.
$$
In particular this is true when $s=1$. As we said above, the horocyclic deformation of parameter $1$ corresponds to the Dehn twist along $\beta$. If we fix a point $x\in\tei$, we get for any $t\geq 0$
\begin{equation}\label{contre}
 \ray_{\left[\alpha\right]}^{t}\left( \tau_{\beta} \cdot x \right) = \tau_{\beta}\cdot \ray_{\left[\alpha\right]}^{t}\left( x\right).
\end{equation}
We recall that for any  $y=\left[ Y, g \right]\in\tei$, $\tau_{\beta} \cdot y = \left[ Y, g\circ \tau_{\beta}^{-1}\right]$. However, Gardiner and Masur showed in \cite{gm} that for any $y\in\tei$, $\ray_{\left[\alpha\right]}^{t}\left( y\right)\underset{t\rightarrow +\infty}{\cvggm} \left[ \alpha \right]$ and Miyachi proved in \cite{miyachi1} that the \emph{mapping class group} extends to the Gardiner-Masur boundary. Thus,  when $t$ tends to $+\infty$ in  Equality (\ref{contre}),  we obtain
$$
\left[ \alpha\right]= \tau_{\beta}\cdot \left[ \alpha \right],
$$  
which is obviously not true. 

Because the Teichmüller rays directed by simple closed curves converge in the Thurston boundary, we may do the same reasoning by using the convergence in $\pmf$. 
\end{remarque}

Another interesting fact is that the horocyclic deformation is continuous with respect to the direction. The statement is the following.

\begin{lemme}\label{continue}
Let $x=\left[ X, f \right]\in\tei$ and $s\in\mathbb{R}$. Then $\horo_{\left[ \cdot \right]}^{s}\left( x \right) : \pmf \rightarrow \tei$ is continuous. 
\end{lemme}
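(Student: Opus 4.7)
The plan is to factor the map $[F]\mapsto\horo_{[F]}^{s}(x)$ as a composition of continuous maps passing through the space $\qd(X)$ of holomorphic quadratic differentials on the Riemann surface $X$ underlying $x=[X,f]$, and then to conclude via the joint continuity of the $\sl2$-action on $\qd(X)$ with values in $\tei$.

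First, I would identify $\pmf$ with $\mf_{1}$ via the normalisation $\ext_{x_0}(F)=1$; since $\ext_{x_0}$ is a positive continuous function on $\mf\setminus\{0\}$, this identification is a homeomorphism and lets me represent each $[F]$ by a canonical element $F\in\mf_{1}$. Next, by Theorem \ref{hubbard&masur} and Remark \ref{remarquehubbard}(2), the Hubbard--Masur correspondence $F\mapsto q_{x,F}$ is a homeomorphism $\mf\to\qd(X)$, and Kerckhoff's identity (\ref{defgeo}) gives that $F\mapsto\ext_{x}(F)=\|q_{x,F}\|$ is continuous. Now, by Relation (\ref{expressionhoro}), once the Teichm\"uller disc $\dd(x,[F])$ is pulled back to $\mathbb{H}$ via $\imath_{(x,[F])}$, the basepoint $x$ corresponds to $\ii$ and $\horo_{[F]}^{s}(x)$ corresponds to $\ii-s\cdot\ext_{x}(F)$. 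Combined with the geometric description of the Teichm\"uller disc recalled after Lemma \ref{lemmedisque}, this shows that $\horo_{[F]}^{s}(x)$ is obtained from $x$ by acting on the natural coordinates of $q_{x,F}$ by the parabolic element
\[
A_{s}(F)=\begin{pmatrix}1 & -s\cdot\ext_{x}(F)\\ 0 & 1\end{pmatrix}\in\sl2,
\]
whose entries depend continuously on $F$.

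Finally, I would invoke the fact that the $\sl2$-action on $\qd(X)$ by post-composition on natural coordinates defines a jointly continuous map $\sl2\times\qd(X)\to\tei$. Locally away from critical points this action produces a Beltrami differential on $X$ that varies continuously in $L^{\infty}$ with the pair $(A,q)$, and by the Ahlfors--Bers theorem the normalised solution of the associated Beltrami equation, hence the resulting point in $\tei$, depends continuously on its coefficient. Composing the three continuous maps
\[
[F]\;\longmapsto\;(A_{s}(F),\,q_{x,F})\;\longmapsto\;\horo_{[F]}^{s}(x)
\]
yields the desired continuity of $\horo_{[\cdot]}^{s}(x)$ on $\pmf$.

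The hard part is the last step, namely the joint continuity of the $\sl2$-action with values in $\tei$; everything else is a formal chain of already established continuities. This continuity, however, is a standard consequence of the Ahlfors--Bers measurable Riemann mapping theorem applied to the Beltrami coefficients produced by $(A,q)$, so once that input is in hand the argument reduces to a routine continuity chase.
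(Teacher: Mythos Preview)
Your argument is correct, but the route you take differs from the paper's. You express $\horo_{[F]}^{s}(x)$ via the parabolic matrix acting on natural coordinates and then invoke the joint continuity of the $\sl2$-action through Ahlfors--Bers. The paper instead stays with Definition~\ref{horo}: it writes $\horo_{[F_n]}^{s}(x)=\ray_{[F_{h,\,e^{-\ii\theta_s}q_n}]}^{\,2\tanh^{-1}(k_s)}(x)$, observes that $[F_n]\mapsto q_n\in\qd_1(X)$ is continuous by Hubbard--Masur (as you also note), and that $q\mapsto[F_{h,\,e^{-\ii\theta_s}q}]$ is continuous, and then simply cites that the Teichm\"uller homeomorphism~(\ref{theoremteich}) is continuous in $\pmf$. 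In other words, the paper uses the ``rotate then stretch'' polar description of the disc and the already-packaged continuity of Teichm\"uller's theorem, whereas you unpack that continuity down to the Beltrami coefficient level. Your approach is more self-contained and makes the analytic mechanism explicit; the paper's is shorter because it leans on a theorem already stated in the background section. Both factor through the same Hubbard--Masur identification, so the substantive difference is only in the final step.

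One small point worth tightening: the parameters $k_s$ and $\theta_s$ in Definition~\ref{horo} depend on $\ext_x(F)$ (not just on $s$), so in the paper's own argument---and implicitly in yours when you write $A_s(F)$---this dependence has to be carried along; you handle this correctly by noting $\ext_x(F)=\|q_{x,F}\|$ is continuous.
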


\begin{proof}
This is just a consequence of Theorem \ref{theoremteich}. Indeed, let $ \left[ F_n \right] $ be a sequence of elements in $\pmf$ which converges to $\left[ F \right]$ for the topology induced by the geometric intersection. Let $\left( q_n \right)_n$ and $q_{F}$ be the corresponding elements in $\qd_{1}\left( X \right)$. For any $n$, the point $\horo_{\left[ F_n \right]}^{s}\left( x \right)$  is determined by the teichmüller deformation of parameter $2\tanh^{-1}\left( k_s \right)$ directed by the horizontal foliation of $e^{-\ii \theta_{s}}\cdot q_{n}$. We have the same description for $\horo_{\left[ F \right]}^{s}\left( x \right)$ by using $q_F$ instead of $q_n$. As the map given in Theorem \ref{theoremteich} is a homeomorphism, a fortiori it is continuous with respect to $\pmf$ and so the lemma is proved. 
\end{proof}

This lemma will be useful to prove that the extremal length of a particular foliation does not change along a horocyclic deformation. Indeed, we have

\begin{prop}\label{invariance}
Let $F\in\mf$ and $x=\left[ X, f \right]\in\tei$. Then 
$$
\forall s\in\mathbb{R}, \, \ext_{\horo_{\left[ F \right]}^{s}\left( x\right)}\left( F \right)=\ext_{x}\left( F \right).
$$
\end{prop}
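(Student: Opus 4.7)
The plan is to identify the horocyclic deformation with a parabolic element of $\sl2$ acting on the natural coordinates of $q_{x,F}$, using the geometric description of the Teichm\"uller disc recalled just before Lemma~\ref{lemmedisque}, and then to invoke Kerckhoff's identity~(\ref{defgeo}), i.e.\ $\ext_{x}(F)=\|q_{x,F}\|$. Since parabolic upper-triangular elements fix the horizontal direction pointwise and have unit determinant, they will preserve both the measured horizontal foliation and the flat area; this will force $\ext_{\horo_{[F]}^{s}(x)}(F)=\ext_{x}(F)$ via the uniqueness in Theorem~\ref{hubbard&masur}.

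First I would read off the matrix from the normalization. By~(\ref{expressionhoro}), $\horo_{[F]}^{s}$ sends the base point $\ii\in\mathbb{H}$ of the Teichm\"uller disc $\dd(x,[F])$ to $\ii-s\cdot\ext_{x}(F)$; under the identification $\sl2/\so2\cong\mathbb{H}$ given by $g\cdot\so2\mapsto g\cdot\ii$, this is realized by
\[
g_{s}=\begin{pmatrix} 1 & -s\cdot\ext_{x}(F) \\ 0 & 1 \end{pmatrix}.
\]
In the geometric picture, $g_{s}$ acts affinely on the natural coordinates $(u,v)$ of $q_{x,F}$, producing on the new Riemann surface $\horo_{[F]}^{s}(x)$ a regular quadratic differential $q'$ whose natural coordinates satisfy $u'=u-s\cdot\ext_{x}(F)\cdot v$ and $v'=v$.

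To conclude I would verify two properties of $q'$. Since $v'=v$, the horizontal foliation of $q'$ coincides, as a measured foliation on the underlying smooth surface, with that of $q_{x,F}$ (with transverse measure $|dv'|=|dv|$); hence it represents the class $F\in\mf$, and the uniqueness in Theorem~\ref{hubbard&masur} gives $q'=q_{\horo_{[F]}^{s}(x),\,F}$. On the other hand, $\det g_{s}=1$ yields
\[
\|q'\|=\iint du'\wedge dv'=\iint du\wedge dv=\|q_{x,F}\|.
\]
Combined with~(\ref{defgeo}), this gives the desired equality.

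The main obstacle I expect is purely bookkeeping: one must check that the Poincar\'e-disc parameters $k_{s}$, $\theta_{s}$ used in Definition~\ref{horo} do conjugate, through the biholomorphism $\dd\to\mathbb{H}$, to the matrix $g_{s}$ above. Once this translation is in place, the two key invariances reduce to the elementary facts that unipotent upper-triangular matrices fix the horizontal axis pointwise and preserve $du\wedge dv$. An alternative route, which uses Lemma~\ref{continue}, is to establish the identity first on a dense subset of $\pmf$ (for instance the weighted simple closed curves, where the conformal twist description of $\horo_{[\alpha]}^{s}$ preserves the modulus of an embedded annulus around $\alpha$) and then extend it to all $F\in\mf$ by joint continuity of the extremal length together with the continuity of the direction map supplied by Lemma~\ref{continue}.
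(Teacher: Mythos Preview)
Your argument is correct, and in fact your ``alternative route'' at the end is precisely the proof the paper gives: it establishes the identity first for a simple closed curve $\alpha$ (using the Marden--Masur conformal twist $\tilde{f}_s\colon z\mapsto z\vert z\vert^{\ii s/M}$ on the cylinder $X\setminus\Gamma$, which visibly preserves the modulus and hence $\ext(\alpha)$ by~(\ref{defgeo1})), and then passes to an arbitrary $F$ via the density of $\mathcal{S}$ in $\pmf$ together with Lemma~\ref{continue}.

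Your main approach is genuinely different and more direct. By reading off from~(\ref{expressionhoro}) that $\horo_{[F]}^{s}$ is realized by the parabolic $g_s=\begin{pmatrix}1&-s\,\ext_x(F)\\0&1\end{pmatrix}$ in the $\sl2$ description of the disc, you bypass both the cylinder computation and the density argument: the two properties $v'=v$ and $\det g_s=1$ give, via Theorem~\ref{hubbard&masur} and~(\ref{defgeo}), the equality for \emph{all} $F$ at once. The paper has effectively already granted the bookkeeping you worry about, since the proof of Lemma~\ref{lemmedisque} asserts that both factors in~(\ref{matricedisque}) preserve $[F]$; your observation sharpens this to the statement that the parabolic factor preserves $F$ \emph{with its measure}, not merely projectively. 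What the paper's route buys is a concrete geometric picture in the simple-closed-curve case (the conformal twist on an explicit annulus), which is reused later in the proof of Theorem~\ref{theocourbe}; what your route buys is that the result follows uniformly from the flat geometry without any appeal to Lemma~\ref{continue} or to approximation.
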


\begin{proof}
We will start by showing this property in the case of simple closed curves and then, by using Lemma \ref{continue} and the fact that $\mathcal{S}$ is dense in $\pmf$  we will get the general case. 

Let $\alpha\in\mathcal{S}$ and $q_\alpha$ the corresponding quadratic differential on $X$. We recall that the horizontal foliation of $q_\alpha$ is exactly $f_{\ast}\left( \alpha\right)$. Thus, the complement in $X$ of the corresponding critical graph is biholomorphic to a cylinder $A$ of modulus $M=\frac{1}{\ext_{x}\left( \alpha \right)}$. We can consider that $A$ is the planar annulus of inner radius $1$ and outer radius $\exp\left( 2\pi M\right)$.  Now we fix $s\in\mathbb{R}$ and we denote by $x_s$ the image of $x$ by $\horo_{\left[\alpha \right]}^{s}$. Following \cite{marden&masur}, $x_s =\left[ X_s , f_s \right]$, where $f_s$ is the quasiconformal map which lifts to $\tilde{f}_s : A \rightarrow A$; $z\mapsto z\vert z \vert^{\ii\frac{s}{M}}$. We say that the map $f_s$ is the conformal twist along $\alpha$ of parameter $t$.  The surface $X_s$ is obtained by identifying some parts of boundary components of $A$ and so,  if $s$ is not an integer, then $X_s$ is different to $X$. As the map $\tilde{f}_{s}$ does not change the modulus of $A$ and preserves the core curve which is in the class of $\alpha$, we deduce from the  geometric definition of extremal length that 
$$
\ext_{x_s}\left( \alpha\right)= \ext_{x}\left( \alpha\right).
$$ 
\end{proof}

We  deduce the following result.

\begin{corollaire}
Let $F\in\mf$ and $x\in\tei$. Then for any $s$, $t\in\mathbb{R}$
$$
\horo_{\left[ F \right]}^{s+t}\left( x \right)= \horo_{\left[ F \right]}^{s}\left( \horo_{\left[ F \right]}^{t}\left( x \right)\right).
$$
In particular, $\horo_{\left[ F \right]}^{t}\, : \tei \rightarrow \tei$ is a bijection.
\end{corollaire}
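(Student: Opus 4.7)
The plan is to reduce both assertions to a precise statement about the action of $\horo_{\left[F\right]}^{t}$ in the upper half-plane model of the Teichmüller disc $\dd\left(x, \left[F\right]\right)$. Normalize $F\in\mf_1$ (legitimate since the deformation depends only on $\left[F\right]$) and identify $\dd\left(x, \left[F\right]\right)$ with $\mathbb{H}$ via $\imath_{\left(x, \left[F\right]\right)}$, so $x\leftrightarrow \ii$. The key claim is that in this model,
\[
\horo_{\left[F\right]}^{t}\left(z\right) = z - t\cdot\ext_x\left(F\right) \quad \text{for every } z\in\mathbb{H},
\]
strengthening (\ref{expressionhoro}) from the basepoint $z=\ii$ to every point. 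Granted this, $\horo_{\left[F\right]}^{s+t}\left(x\right) = \horo_{\left[F\right]}^{s}\circ\horo_{\left[F\right]}^{t}\left(x\right)$ is immediate from $\left(\ii - t\ext_x\left(F\right)\right) - s\ext_x\left(F\right) = \ii - \left(s+t\right)\ext_x\left(F\right)$; bijectivity of $\horo_{\left[F\right]}^{t}$ then follows by choosing $s=-t$ and observing that $\horo_{\left[F\right]}^{0} = \id$, either from Definition \ref{horo} directly or from (\ref{expressionhoro}).

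To prove the claim, fix $p\in\dd\left(x, \left[F\right]\right)$ with $\imath_{\left(x, \left[F\right]\right)}$-coordinate $z = a + \ii b$. By Lemma \ref{lemmedisque}, $\dd\left(p, \left[F\right]\right)$ and $\dd\left(x, \left[F\right]\right)$ coincide as subsets of $\tei$, so $\phi := \imath_{\left(x, \left[F\right]\right)}^{-1}\circ\imath_{\left(p, \left[F\right]\right)}$ is a biholomorphic automorphism of $\mathbb{H}$. By Relation (\ref{expressionray}), applied at the basepoint of each parametrization, Teichmüller rays directed by $\left[F\right]$ run vertically upward toward $\infty$; hence both parametrizations place the direction $\left[F\right]$ at the cusp $\infty$, so $\phi$ fixes $\infty$ and has the form $\phi\left(w\right) = \alpha w + \beta$ with $\alpha > 0$ and $\beta\in\mathbb{R}$. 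The condition $\phi\left(\ii\right) = z$ pins down $\alpha = b$ and $\beta = a$. Applying (\ref{expressionhoro}) inside the $\imath_{\left(p, \left[F\right]\right)}$-model, $\horo_{\left[F\right]}^{t}\left(p\right)$ has coordinate $\ii - t\ext_p\left(F\right)$ there, hence coordinate $\phi\left(\ii - t\ext_p\left(F\right)\right) = z - tb\cdot\ext_p\left(F\right)$ in the $\imath_{\left(x, \left[F\right]\right)}$-model.

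It remains to compute $b\cdot\ext_p\left(F\right)$. Using the basepoint formulas (\ref{expressionhoro}) and (\ref{expressionray}), one checks that $p = \ray_{\left[F\right]}^{\ln b}\left(\horo_{\left[F\right]}^{-a/\ext_x\left(F\right)}\left(x\right)\right)$. Proposition \ref{invariance} asserts that the extremal length of $F$ is preserved by the horocyclic factor, while Relation (\ref{rel}) asserts that it is multiplied by $e^{-\ln b} = 1/b$ by the Teichmüller factor; together these give $\ext_p\left(F\right) = \ext_x\left(F\right)/b$, and therefore $\horo_{\left[F\right]}^{t}\left(p\right)$ has $\imath_{\left(x, \left[F\right]\right)}$-coordinate $z - t\ext_x\left(F\right)$, independently of $a$ and $b$, which is the claim. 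The one point meriting real care is the verification that $\phi$ fixes $\infty$, equivalently that the two parametrizations $\imath_{\left(x, \left[F\right]\right)}$ and $\imath_{\left(p, \left[F\right]\right)}$ attach the direction $\left[F\right]$ to the same cusp; this is immediate from the defining formula (\ref{teichmullerdisc}), since in it the positive real ray in $\mathbb{D}$ is always sent to a Teichmüller ray directed by $\left[F\right]$, regardless of the basepoint.
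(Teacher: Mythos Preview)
Your proof is correct and follows the paper's approach: work in the upper half-plane model of the Teichmüller disc and use Property~\ref{invariance} to compare extremal lengths at the shifted basepoint. The paper is simply more economical, since for the corollary one only needs the intermediate point $y=\horo_{[F]}^{t}(x)$, which already has height $b=1$ in the $\imath_{(x,[F])}$-model; there Property~\ref{invariance} directly gives $\ext_{y}(F)=\ext_{x}(F)$ and the change of basepoint is a pure horizontal translation, whereas you establish the stronger formula $\horo_{[F]}^{t}(z)=z-t\,\ext_{x}(F)$ on all of $\mathbb{H}$, which is why you also need Relation~(\ref{rel}) to handle $b\neq 1$.
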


\begin{proof}
By (\ref{expressionhoro}) we have 
$$
\horo_{\left[ F \right]}^{s+t}\left( x \right)= \ii - \left( s+ t \right)\cdot \ext_{x}\left( F \right).
$$
On the other hand, using Property \ref{invariance} we get 
$$
\horo_{\left[ F \right]}^{s}\left( \horo_{\left[ F \right]}^{t}\left( x \right)\right) = \ii -\left( s+ t \right)\ext_{x}\left( F \right).
$$
\end{proof}

\begin{question}
Does $\horo_{\left[ F \right]}^{s}\left( \cdot \right)$  is a homeomorphism?
\end{question}

The main interest of this paper concerns the asymptotic behaviour of the horocyclic deformation in the Gardiner-Masur boundary. This is explained by the following question.

\begin{question}\label{questioncvg}
Does $\left( \mathcal{H}^{t}_{\left[ F\right]}\left( x \right) \right)_{t}$ converge in the Gardiner-Masur boundary when $t\rightarrow \pm\infty$?
\end{question}

Looking at Figure \ref{horocycle}, a naive guess would be that $\left( \mathcal{H}^{t}_{\left[ F \right]}\left( x \right) \right)_{t}$ converges and the limit would be exactly  the limit of the Teichmüller ray determined by $F$. This is the case if $\dim_{\mathbb{C}}\tei =1$. Indeed, the embedding (\ref{teichmullerdisc}) is an homeomorphism and from \cite{miyachiproc}, we have that this homeomorphism can be continuously extended to the boundary. Unfortunately, Miyachi proved in \cite{miyachilip} (Subsection 8.1) that if $\dim_{\mathbb{C}}\tei \geq 2$,  the embedding $\imath_{\left( x, \left[ F \right]\right)}$ does not extend continuously to the Gardiner-Masur boundary. However, as we shall see below, the result holds in at least two particular cases.

\begin{figure}[ht]
\begin{center}
\psfrag{x}{$x$}
\psfrag{D}{$\dd\left( x , \left[ F \right]\right)$}
\psfrag{tei}{$\ray_{\left[ F \right]}^{t}\left( x \right)$}
\psfrag{horo}{$\horo_{\left[ F \right]}^{t}\left( x \right)$}
\includegraphics[width=0.6\linewidth]{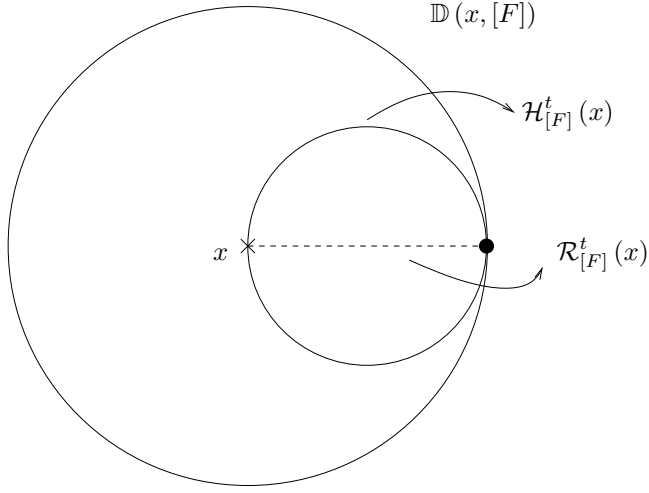}
\caption{The Teichmüller disc $\dd\left( x , \left[ F \right]\right)$. Points of the circle which crosses $x$ are horocyclic deformations directed by $\left[ F \right]$ and points of the dotted segment are Teichmüller deformations directed by $\left[ F \right]$.}\label{horocycle}
\end{center}
\end{figure}

\section{Convergence in the Gardiner-Masur boundary}

\subsection{The simple closed curves case}

Let $x=\left[ X, f \right]\in\tei$ and $\alpha\in\mathcal{S}$. We are interested in the convergence of $\left( \horo_{\left[\alpha\right]}^t \left( x \right)\right)_{t}$ when $t\rightarrow \pm\infty$. We recall that $\tau_{\alpha}$ denotes the Dehn twist along $\alpha$. To simplify notation we set for any $t\in\mathbb{R}$, $x_t = \horo_{\left[ \alpha \right]}^{t}\left( x \right)$. As we remarked in the proof of Property \ref{invariance}, $x_t =\left[ X_t , f_t \right]$ where $f_t$ is the  conformal twist along $\alpha$ of parameter $t$. We also recall that if $t\in\mathbb{Z}$, then $f_t = f\circ \tau_{\alpha}^{-t}$.

In order to study the convergence of $\left( x_t \right)_t$, we will have to use another result of  Miyachi.
\begin{theo}[\cite{miyachi2}, Theorem 3]\label{caraczero}
Let $F\in\mf$ be either a uniquely ergodic foliation or a simple closed curve. Let $p\in\teigmb$. If for all $G\in\mf$ such that $i\left( F, G\right)=0$ we have $\epsilon_{p}\left( G \right)=0$, then
$$
\epsilon_p \left( \cdot \right) =\frac{1}{\ext_{x_0}^{\frac{1}{2}}\left( F \right)}\cdot i\left( F, \cdot \right).
$$
\end{theo}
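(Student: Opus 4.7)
The function $\phi(G) := i(F,G)/\ext_{x_0}^{\frac{1}{2}}(F)$ is, by prior work (\cite{miyachi1} for the rational case and \cite{walsh2} in greater generality), already known to be the Gardiner--Masur representative of the limit $p_F$ of the Teichmüller ray $\ray_{\left[F\right]}^t\left(x_0\right)$ when $F$ is a simple closed curve or uniquely ergodic. So the theorem is really a uniqueness statement: no boundary point $p$ other than $p_F$ can have $\epsilon_p$ vanishing on the annihilator $\mathcal{Z}_F := \left\lbrace G \in \mf \,:\, i(F,G) = 0 \right\rbrace$. My plan is to pick a sequence $y_n$ with $y_n \cvggm p$ and to compare it with the Teichmüller ray points $z_n := \ray_{\left[F\right]}^{d_{T}\left(x_0,y_n\right)}\left(x_0\right)$, aiming for a comparison strong enough that Lemma \ref{lemmebidon} applies.

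First I would extract from the hypothesis its most basic consequence: since $i(F,F) = 0$ one has $\epsilon_p(F) = 0$, which translates to $\ext_{y_n}\!\left( F \right) = o\!\left( e^{d_{T}\left(x_0,y_n\right)} \right)$. By (\ref{rel}), the points $z_n$ satisfy $\ext_{z_n}\!\left( F \right) = e^{-d_{T}\left(x_0,y_n\right)}\ext_{x_0}\!\left( F \right)$, hence $\epsilon_{z_n}\!\left( F \right) \to 0$ as well. Both sequences are thus ``headed in the direction of $F$'' in this quantitative sense, a first hint that their Gardiner--Masur limits coincide.

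The decisive step is to exploit that $\epsilon_p$ vanishes on all of $\mathcal{Z}_F$, not merely at $F$. When $F$ is uniquely ergodic, $\mathcal{Z}_F = \mathbb{R}_{+}\!\cdot\! F$ and the vanishing hypothesis reduces to $\epsilon_p(F) = 0$; here the rigidity comes from unique ergodicity itself, via Masur's theorem \cite{masurcommute} on Thurston-boundary convergence of the associated Teichmüller ray, combined with a normal-family argument in the spirit of \cite{miyachi1, miyachigromov} upgrading that convergence to convergence of $\epsilon_{y_n}$ to $\phi$. When $F = \alpha$ is a simple closed curve, $\mathcal{Z}_\alpha$ is much larger (all foliations supported in the complement of $\alpha$, together with multiples of $\alpha$), and the vanishing of $\epsilon_p$ on this whole set forces the Hubbard--Masur quadratic differentials at the $y_n$ to concentrate on cylindrical neighborhoods of $\alpha$; combined with the Marden--Masur analysis \cite{marden&masur} of the conformal-twist parametrization inside $\dd\!\left( x_0, \left[\alpha\right] \right)$, this should give the explicit limit $\phi$.

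Once the rigidity is in place, I would either compute $\epsilon_p\!\left( G \right) = \lim_n \epsilon_{y_n}\!\left( G \right)$ directly, using Minsky's inequality $i(F,G)^2 \leq \ext_{y_n}\!\left( F \right)\ext_{y_n}\!\left( G \right)$ as a lower bound and the asymptotic structure of the quadratic differentials as an upper bound, or instead verify $d_{T}\left( y_n , z_n \right) \to 0$ along a subsequence and invoke Lemma \ref{lemmebidon}. The normalization is automatic: Gardiner's duality $\ext_{x_0}\!\left( F \right) = \sup_{G \in \mf_1} i(F,G)^2$ yields $\max_{G \in \mf_1}\phi\!\left( G \right) = 1$, matching condition (2) of Theorem \ref{pointbordgm}, so uniqueness of the representative forces $\epsilon_p = \phi$. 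The main obstacle will be precisely that decisive step of translating qualitative vanishing on $\mathcal{Z}_F$ into a quantitative identification with $\phi$: it is where the argument essentially uses that $F$ is a simple closed curve or uniquely ergodic and cannot be extended to arbitrary $F$.
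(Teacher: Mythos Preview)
The paper does not contain a proof of this statement: Theorem~\ref{caraczero} is quoted verbatim from Miyachi~\cite{miyachi2} (Theorem~3 there) and used as a black box in the proofs of Theorems~\ref{theocourbe} and~\ref{theoergodic}. There is therefore nothing in the paper to compare your proposal against.

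On its own merits, what you have written is an outline of a strategy rather than a proof, and you say so yourself: the ``decisive step'' of converting the vanishing of $\epsilon_p$ on $\mathcal{Z}_F$ into the explicit identification $\epsilon_p = \phi$ is named but not executed. Two specific gaps are worth flagging. First, the route through Lemma~\ref{lemmebidon} is almost certainly hopeless: there is no reason to expect $d_T(y_n, z_n) \to 0$ merely because both sequences converge to the same Gardiner--Masur point, and in fact the paper itself (the remark following Theorem~\ref{theocourbe}, answering Question~\ref{question1}) exhibits sequences with identical GM limit and $d_T(y_n, z_n) \to +\infty$. Second, Minsky's inequality only gives you the lower bound $\epsilon_p(G) \ge \phi(G)$ in the limit (and even that requires controlling $\ext_{y_n}(F)$ more precisely than $o(e^{d_T(x_0,y_n)})$); the matching upper bound is the heart of the matter and is not addressed. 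Miyachi's actual argument in~\cite{miyachi2} proceeds differently, through his structural results on the null sets $\{\epsilon_p = 0\}$ and the injectivity of the map $p \mapsto \epsilon_p$ on the boundary, rather than through any comparison with a specific Teichm\"uller ray.
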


We have all the  elements to establish the following result.

\begin{theo}\label{theocourbe}
With the above notation,  
$$x_t \underset{t\rightarrow \pm \infty}{\cvggm} \left[ \alpha \right].$$
\end{theo}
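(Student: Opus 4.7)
The plan is to show that every subsequential Gardiner--Masur limit of $(x_t)$ as $t\to\pm\infty$ equals $[\alpha]$, and then to conclude by compactness of $\teigm$. The identification of such a limit will go through Miyachi's characterization (Theorem~\ref{caraczero}, applied with $F=\alpha$): it suffices to prove that any subsequential limit $p$ lies in $\teigmb$ and satisfies $\epsilon_p(G)=0$ whenever $i(\alpha,G)=0$.

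To see that $(x_t)$ escapes to the boundary, I would use Relation~\eqref{expressionhoro}: under the isometric identification $\imath_{(x,[\alpha])}:\mathbb{H}\to\dd(x,[\alpha])$, the point $x_t$ corresponds to $\ii-t\cdot\ext_{x}(\alpha)$, which traces the horizontal horocycle through $\ii$. Since this horocycle escapes every compact set of $\mathbb{H}$, $d_T(x,x_t)\to+\infty$ as $t\to\pm\infty$, hence $d_T(x_0,x_t)\to+\infty$, so no accumulation of $(x_t)$ can occur in $\tei$.

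The core estimate is uniform boundedness of $\ext_{x_t}(\beta)$ in $t\in\mathbb{R}$ for every $\beta\in\mathcal{S}$ with $i(\alpha,\beta)=0$. Writing $t=n+r$ with $n\in\mathbb{Z}$ and $r\in[0,1)$, the group law $\horo_{[\alpha]}^{t}=\horo_{[\alpha]}^{n}\circ\horo_{[\alpha]}^{r}$, together with the Marden--Masur identification of $\horo_{[\alpha]}^{n}$ with $\tau_\alpha^{\pm n}$ and the invariance of the isotopy class of $\beta$ under $\tau_\alpha$, yields
\begin{equation*}
\ext_{x_t}(\beta)=\ext_{\horo_{[\alpha]}^{r}(x)}(\beta),
\end{equation*}
which is continuous, hence bounded, in $r\in[0,1]$. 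Consequently $\epsilon_{x_t}(\beta)=\sqrt{\ext_{x_t}(\beta)\,e^{-d_T(x_0,x_t)}}\to 0$, so any subsequential GM-limit $p$ satisfies $\epsilon_p(\beta)=0$ by Theorem~\ref{pointbordgm}(3). By homogeneity and continuity of $\epsilon_p$ on $\mf$, this vanishing extends from weighted simple closed curves disjoint from $\alpha$ to every $G\in\mf$ with $i(\alpha,G)=0$, using the density of such multicurves in $\{G\in\mf:i(\alpha,G)=0\}$.

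Theorem~\ref{caraczero} then forces $\epsilon_p(\cdot)=\ext_{x_0}(\alpha)^{-1/2}\,i(\alpha,\cdot)$, the Miyachi-normalized Gardiner--Masur representative of $[\alpha]\in\teigmb$, completing the identification of every subsequential limit and hence the convergence by compactness of $\teigm$. The step most likely to demand care is the density assertion for $\{G\in\mf:i(\alpha,G)=0\}$, which reduces to the standard description of measured foliations supported in the subsurface complementary to $\alpha$; the remaining steps are formal once Relation~\eqref{expressionhoro}, the Marden--Masur description at integer times, and Miyachi's Theorems~\ref{pointbordgm} and~\ref{caraczero} are in hand.
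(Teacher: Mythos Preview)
Your proposal is correct and follows essentially the same route as the paper: take a subsequential Gardiner--Masur limit $p$, show $\epsilon_p(\beta)=0$ for every $\beta\in\mathcal{S}$ with $i(\alpha,\beta)=0$ by exploiting that the integer-time horocyclic deformation is a power of the Dehn twist $\tau_\alpha$ (which fixes such $\beta$), and then invoke Miyachi's Theorem~\ref{caraczero}. The only cosmetic differences are that the paper bounds $\ext_{x_t}(\beta)$ via Kerckhoff's inequality $\ext_{x_t}(\beta)\le e^{d_T(x_{\lfloor t\rfloor},x_{\lceil t\rceil})}\ext_{x_{\lfloor t\rfloor}}(\beta)$ while you use the group law to get the exact identity $\ext_{x_t}(\beta)=\ext_{\horo_{[\alpha]}^{r}(x)}(\beta)$, and that you spell out the escape to the boundary and the passage from simple closed curves to general $G\in\mf$ with $i(\alpha,G)=0$, which the paper leaves implicit.
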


\begin{proof}
Let $p\in\teigmb$ be any cluster point of $\left( x_t \right)_t$. Up to subsequence, we can assume that $x_t \underset{t\rightarrow\infty}{\cvggm} p$. By Property \ref{invariance}, we already have
\begin{align*}
\epsilon_{p}\left( \alpha \right) &= \lim_{t\rightarrow \infty}\epsilon_{x_t} \\
&=  \lim_{t\rightarrow \infty}\left( \frac{\ext_{x_t}\left( \alpha \right)}{e^{2\cdot\tanh^{-1}\left( k_t \right)}}\right)^{\frac{1}{2}} \\
&= \lim_{t\rightarrow \infty}\left( \frac{\ext_{x}\left( \alpha \right)}{e^{2\cdot\tanh^{-1}\left( k_t \right)}}\right)^{\frac{1}{2}} \\
&= 0. 
\end{align*}
Now, let  $\beta\in\mathcal{S}$ such that $i\left( \alpha, \beta\right)=0$. For any $t\in\mathbb{R}$, we have, by the quasiconformal distorsion (or the Kerckhoff formula),
\begin{align*}
\ext_{x_t}\left( \beta\right) &\leq e^{d_{T}\left( x_{\lfloor t \rfloor} , x_t \right)}\cdot \ext_{x_{\lfloor t \rfloor}}\left( \beta \right) \\
&\leq e^{d_{T}\left( x_{\lfloor t \rfloor} , x_{\lceil t \rceil} \right)}\cdot \ext_{X}\left( \tau_{\alpha}^{-\lfloor t \rfloor}\left( \beta \right) \right) \\
&\leq  e^{d_{T}\left( x_{\lfloor t \rfloor} , x_{\lceil t \rceil} \right)}\cdot \ext_{x}\left(  \beta  \right).
\end{align*}
Furthermore, the mapping class group acts by isometries with respect to the Teichmüller distance, then $d_{T}\left( x_{\lfloor t \rfloor} , x_{\lceil t \rceil} \right)=d_{T}\left( x, x_1 \right)$. Thus, $\left( \ext_{x_t}\left( \beta \right)\right)_t$ is bounded from above and we deduce that 
$$
\epsilon_{p}\left( \beta \right)= \lim_{t\rightarrow \infty}\left( \frac{\ext_{x_t}\left( \alpha \right)}{e^{2\cdot\tanh^{-1}\left( k_t \right)}}\right)^{\frac{1}{2}} =0.
$$
The conclusion follows by using Theorem \ref{caraczero}.
\end{proof}

This result is analogous to the convergence of the Frenchel-Nielsen deformations in the Thurston boundary. Indeed, it is well known that a Frenchel-Nielsen deformation determined by a simple closed curve, converges to this simple closed curve. By the way, from the proof of Theorem \ref{theocourbe}, we can deduce the following corollary.

\begin{corollaire}
Let $\alpha$ be a simple closed curve. Let $x\in\tei$. Then 
$$
\horo_{\left[ \alpha \right]}\left( x\right) \underset{t\rightarrow \pm\infty}{\cvgth} \left[ \alpha\right].
$$
\end{corollaire}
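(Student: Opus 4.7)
The plan is to reduce the real-parameter convergence to the well-known asymptotic behavior of Dehn-twist orbits in the Thurston compactification. Combining the semigroup property $\horo_{[\alpha]}^{n+s} = \horo_{[\alpha]}^{n} \circ \horo_{[\alpha]}^{s}$ from the corollary above with the identification $\horo_{[\alpha]}^{n}(x) = \tau_\alpha^{n} \cdot x$ for integer $n$ (recalled in the text from \cite{marden&masur}), I write every $t \in \mathbb{R}$ as $t = n + s$ with $n = \lfloor t \rfloor$ and $s \in [0,1)$, so that
\[
x_{t} = \tau_{\alpha}^{n} \cdot x_{s}.
\]
Since $s \mapsto x_{s}$ is continuous, $K := \{x_{s} : s \in [0,1]\}$ is compact in $\tei$.

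Next, I will establish the following slightly uniform form of Dehn-twist convergence: if $y_{k} \to y$ in $\tei$ and $|n_{k}| \to \infty$, then $\tau_{\alpha}^{n_{k}} \cdot y_{k} \cvgth [\alpha]$. The key input is the classical asymptotic
\[
\frac{1}{|n|}\, \tau_{\alpha}^{\pm n}\beta \longrightarrow i(\alpha,\beta) \cdot \alpha \quad \text{in } \mf,
\]
valid for every $\beta \in \mathcal{S}$, which follows from the Dehn--Thurston twisting formula $i(\tau_{\alpha}^{\pm n}\beta, \gamma) = |n|\, i(\alpha,\beta)\, i(\alpha,\gamma) + O(1)$ together with the continuity of the intersection pairing on $\mf$. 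Combined with Kerckhoff's joint continuity of hyperbolic length on $\tei \times \mf$ and the identity $l_{\tau_{\alpha}^{n} \cdot y}(\beta) = l_{y}(\tau_{\alpha}^{-n}\beta)$, this yields
\[
\frac{l_{\tau_{\alpha}^{n_{k}} \cdot y_{k}}(\beta)}{|n_{k}|} \;=\; \frac{l_{y_{k}}(\tau_{\alpha}^{-n_{k}}\beta)}{|n_{k}|} \;\xrightarrow[k\to\infty]{}\; i(\alpha,\beta)\, l_{y}(\alpha),
\]
so that, since $l_{y}(\alpha) > 0$, the projective class $[l_{\tau_{\alpha}^{n_{k}} \cdot y_{k}}(\cdot)]$ tends to $[i(\alpha,\cdot)] = [\alpha]$ in $\prs$.

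Finally, given any sequence $t_{k} \to \pm\infty$, decompose $t_{k} = n_{k} + s_{k}$; compactness of $[0,1]$ allows one to extract a subsequence with $s_{k} \to s^{\ast}$, so that $y_{k} := x_{s_{k}} \to x_{s^{\ast}} \in K$ and $|n_{k}| \to \infty$. Applying the previous step to $x_{t_{k}} = \tau_{\alpha}^{n_{k}} \cdot y_{k}$ yields convergence to $[\alpha]$. Since every subsequence of $(x_{t})_{t}$ admits a further subsequence with the same limit, the whole net converges to $[\alpha]$ in the Thurston compactification.

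The main obstacle will be verifying the asymptotic $\tau_{\alpha}^{\pm n}\beta/n \to i(\alpha,\beta)\alpha$ in $\mf$: although folklore, one verifies it rigorously by computing $i(\tau_{\alpha}^{\pm n}\beta, \gamma)$ explicitly on a sufficient family of test curves $\gamma \in \mathcal{S}$ via the twisting formula, and then invoking continuity of the intersection pairing to pass from convergence on test curves to convergence in $\mf$.
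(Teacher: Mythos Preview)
Your proof is correct but follows a genuinely different route from the paper's. The paper bootstraps from the extremal-length estimates already obtained in the proof of Theorem~\ref{theocourbe}: for any $\beta\in\mathcal{S}$ with $i(\alpha,\beta)=0$, the sequence $\ext_{x_t}(\beta)$ was shown there to be bounded; combining this with the Gauss--Bonnet comparison $l_{x_t}^{2}(\beta)\leq 2\pi\lvert\chi(X_0)\rvert\,\ext_{x_t}(\beta)$ bounds $l_{x_t}(\beta)$ as well, so that $\mathcal{L}_{x_t}(\beta)\to 0$. Walsh's identification of the Thurston boundary then forces every cluster point $[G]$ to satisfy $i(G,\beta)=0$ for all such $\beta$, hence $[G]=[\alpha]$.

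Your argument, by contrast, avoids both Walsh's horofunction description and the extremal-length comparison entirely. You use the floor decomposition $x_t=\tau_\alpha^{\lfloor t\rfloor}\cdot x_{t-\lfloor t\rfloor}$ (legitimated by the semigroup corollary and the Marden--Masur identification at integer times), the compactness of $\{x_s:s\in[0,1]\}$, the classical FLP asymptotic $\tau_\alpha^{-n}\beta/\lvert n\rvert\to i(\alpha,\beta)\,\alpha$ in $\mf$, and the joint continuity of hyperbolic length on $\tei\times\mf$ to compute $l_{x_{t_k}}(\beta)/\lvert n_k\rvert\to i(\alpha,\beta)\,l_y(\alpha)$ directly. (A minor attribution point: the joint continuity of $l$ is due to Thurston/FLP rather than Kerckhoff, whose result concerns $\ext$; the fact itself is of course correct.)

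What each approach buys: yours is more elementary and self-contained, being essentially the standard argument that Dehn-twist orbits converge in the Thurston boundary, made uniform over a compact family of base points. The paper's argument, while invoking more machinery, has the virtue of recycling \emph{exactly} the extremal-length bound that drove the Gardiner--Masur convergence, so that the Thurston statement is visibly a corollary of Theorem~\ref{theocourbe} rather than an independent computation.
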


To prove this corollary we need to recall some facts. We recall that the \emph{Thurston asymmetrimetric metric} $d_{\textrm{Th}}\left( \cdot , \cdot \right)$ can be defined as follows.
$$
\forall x, y\in\tei, \; d_{\textrm{Th}}\left( x,y \right)=\log\sup_{\alpha\in\mathcal{S}}\frac{l_{y}\left( \alpha\right)}{l_{x}\left( \alpha\right)}.
$$
This metric was introduced by Thurston in \cite{thurston} and some investigations about it can be found in \cite{papadop&theret}, \cite{theret} and \cite{liu&papadop&su&theret}. We refer also to \cite{papadop&theret1}. We can recognize some similarities with the Kerckhoff formula. Furthermore, by setting for any $x\in\tei$,
\begin{align*}
\mathcal{L}_{x} : \mf &\rightarrow \mathbb{R}_{+} \\
 F & \mapsto \frac{l_{x}\left( F \right)}{e^{d_{\textrm{Th}}\left( x_0 , x \right)}},
\end{align*}
Walsh proved in \cite{walsh} that a sequence $x_n$ in the Teichmüller space converges to the projective class of $G$ in the Thurston boundary, if and only if, $\mathcal{L}_{x_n}$ converges to $F\in\mf \mapsto C\cdot i\left( G, F\right)$ uniformly on compact sets of $\mf$. The constant $C$  depends on $x_0$ and $G$.   

\begin{proof}
Let us denote by $\left( x_t \right)_{t}$ the sequence $\left( \horo_{\left[ \alpha \right]}\left( x\right) \right)_{t}$. Let $\left[ G \right]\in\pmf$ be any cluster point of $\left( x_t \right)_t$. By the analytic definition of extremal length and the Gauss-Bonnet formula, we have that for any $\beta \in\mathcal{S}$ and any $t\in\mathbb{R}$,
\begin{equation}\label{comphypext}
l_{x_t}^2 \left( \beta\right)\leq 2\pi\vert \chi\left( X_0 \right) \vert \ext_{x_t}\left( \beta \right).  
\end{equation}
Thus, for any $\beta\in\mathcal{S}$ such that $i\left( \alpha , \beta \right)=0$, we know from the proof of Theorem \ref{theocourbe} that $\left(\ext_{x_t}\left( \beta\right)\right)_{t}$ is bounded from above, and so from (\ref{comphypext}) we have that $\left( l_{x_t}\left( \beta \right) \right)_t$ is also  bounded from above. Then, we deduce that
$$
\frac{l_{x_t}\left( \beta \right)}{e^{d_{\textrm{Th}}\left( x , x_t \right)}}\underset{t\rightarrow \pm\infty}{\rightarrow} 0,
$$ 
and by the result of Walsh, we can say that 
$$
i\left( G , \beta\right)=0. 
$$
As this equality is true for any simple closed curve whose its geometric intersection with $\alpha$ is zero, we deduce that $G$ is topologically the same foliation as $\alpha$ and so $G$ is projectively equivalent to $\alpha$. This  fact  is true for any cluster point of $x_t$ and then the proof is done.
\end{proof}

\begin{remarque}
In contrast to the uniquely ergodic case, the author does not know if  for a sequence $x_n$ in the Teichmüller space and a simple closed curve $\alpha$ we have
$$
x_n \underset{n\rightarrow +\infty}{\cvggm}\left[ \alpha \right] \Leftrightarrow x_n \underset{n\rightarrow +\infty}{\cvgth}\left[ \alpha \right].
$$
We have this property only in few cases, as when the sequence is given by the Teichmüller deformation or the horocyclic deformation directed by a simple closed curve.  These examples are from the conformal point of view of the Teichmuller space, but we can show that it is also true for the Fenchel-Nielsen deformation and for the stretch lines when the associated horocyclic foliation is a simple closed curve.
\end{remarque}

From Theorem \ref{theocourbe}, we also deduce that we can find two sequences $y_n$ and $z_n$ which are at the same distance from $x$ and converge to the same point in the Gardiner-Masur boundary, but with $d_T \left( y_n , z_n \right) \rightarrow +\infty$. It suffices to take $y_n = \horo_{\left[ \alpha \right]}^{n}\left( x \right)$ and $x_n = \ray_{\left[ \alpha \right]}^{2\cdot \tanh^{-1}\left( k_n \right)}\left( x \right)$. We thus obtain  a positive answer to Question \ref{question1}.

\subsection{The uniquely ergodic case}

Let $x\in \tei$ and $F\in\mf$ be a uniquely measured foliation. 
Before studying the asymptotic behaviour of $\left( \horo_{\left[ F \right]}^{t}\left( x \right)\right)_t$ we recall some facts about the \textit{Gromov product}. The Gromov product of $y$ and $z$ with basepoint $x$ for $d_T$ is defined by
$$
\langle y\mid z \rangle_{x}= \frac{1}{2}\left( d_T \left( x, y\right) + d_T \left(x, z \right) - d_T \left( y , z \right) \right).
$$
Miyachi proved in  \cite{miyachigromov}, that the Gromov product at $x$ has a continuous extension to $\teigm \times \teigm$ with value in $\left[0 , +\infty \right]$. He also gave an explicit expression in terms of extremal length. For any $p\in\teigmb$, the Gromov product of $p$ and $\left[ F \right]$ is 
\begin{equation}\label{gromov}
\langle p \mid \left[F\right] \rangle_{x} = -\frac{1}{2}\log\left( \frac{\epsilon_{p}\left( F\right)}{\ext_{x}^{\frac{1}{2}}\left( F \right)}\right).
\end{equation}
If we set for all $t\in\mathbb{R}$, $y_t =\horo_{\left[ F\right]}^{t}\left( x \right)$ and $z_t = \ray_{\left[ F \right]}^{\vert t \vert}\left( x \right)$, we have by the embedding of the disc into  Teichmüller space, the following:
$$
\langle y_t \mid z_t \rangle_{x} \underset{t\rightarrow \infty}{\longrightarrow}  +\infty.
$$
Then, for any cluster point $p$ of $y_t$ we have, using Relation (\ref{gromov}),  $\epsilon_p \left( F \right)=0$. By Theorem \ref{caraczero} and the fact that $F$ is uniquely ergodic, we conclude that $p=\left[ F \right]$. Thus, we have proved:

\begin{theo}\label{theoergodic}
The horocylcic deformation diricted by a uniquely ergodic foliation $F$ converges in the Gardiner-Masur boundary to the associated projective foliation. 
\end{theo}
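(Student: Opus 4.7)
The plan is to exploit the isometric embedding of the Teichmüller disc $\dd(x, [F])$ into $\tei$ together with Miyachi's continuous extension of the Gromov product to $\teigm$. The key observation is that on a horocycle based at a boundary point of the Poincaré disc, points get arbitrarily close (in the Gromov sense) to points on the geodesic ray going to that boundary point — even though they do not converge in the hyperbolic metric. I will transport this classical fact into Teichmüller geometry by means of Miyachi's formula (\ref{gromov}) for the Gromov product in terms of extremal length, and then use Theorem \ref{caraczero} together with unique ergodicity to pin down the limit.

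Concretely, I set $y_t = \horo_{[F]}^{t}(x)$ and $z_t = \ray_{[F]}^{|t|}(x)$, both of which lie in $\dd(x, [F])$. In the upper half-plane model of this disc, with $x \leftrightarrow \ii$, Relations (\ref{expressionray}) and (\ref{expressionhoro}) realise $y_t$ as a point on the horocycle $\{\im z = 1\}$ and $z_t$ as a point on the vertical geodesic through $\ii$ at height $e^{|t|}$. A short hyperbolic computation in $\mathbb{H}$ shows that the three hyperbolic distances $d_T(x, y_t)$, $d_T(x, z_t)$, $d_T(y_t, z_t)$ combine so that
\begin{equation*}
\langle y_t \mid z_t \rangle_{x} \;=\; \tfrac{1}{2}\bigl(d_T(x,y_t)+d_T(x,z_t)-d_T(y_t,z_t)\bigr) \;\longrightarrow\; +\infty
\end{equation*}
as $t \to \pm\infty$, because $y_t$ and $z_t$ both tend in the Poincaré disc to the same boundary point and the horocyclic point lies at bounded hyperbolic distance from the geodesic ray. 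Since the embedding is isometric, the same divergence holds in $\tei$.

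Now let $p \in \teigm$ be any cluster point of $(y_t)$, which exists by compactness of $\teigm$; pass to a subsequence so that $y_{t_n} \cvggm p$. We already know from Liu--Su and Miyachi (Theorem \ref{pointbordgm} applied to the convergent Teichmüller ray) that $z_{t_n} \cvggm [F]$. By Miyachi's continuous extension of the Gromov product (\cite{miyachigromov}), the divergence of $\langle y_{t_n} \mid z_{t_n} \rangle_x$ together with Formula (\ref{gromov}) forces
\begin{equation*}
-\tfrac{1}{2}\log\!\left(\frac{\epsilon_{p}(F)}{\ext_{x}^{1/2}(F)}\right) = +\infty,
\end{equation*}
whence $\epsilon_{p}(F) = 0$. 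By unique ergodicity of $F$, any $G \in \mf$ with $i(F,G)=0$ is of the form $G = cF$ for some $c\geq 0$, so $\epsilon_{p}(G) = c\,\epsilon_{p}(F) = 0$. Theorem \ref{caraczero} then yields $\epsilon_{p}(\cdot) = \ext_{x_0}^{-1/2}(F)\cdot i(F,\cdot)$, i.e.\ $p = [F]$. Every cluster point coincides with $[F]$, so $y_t \cvggm [F]$.

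The only real work is the Gromov product estimate inside the Teichmüller disc, and I expect that to be the main (if minor) obstacle — once the isometric embedding of $\dd(x,[F])$ is used, it reduces to an elementary hyperbolic computation for the distance from a horocycle to the geodesic toward its base point. The application of Theorem \ref{caraczero} and the passage through cluster points are then routine, provided one remembers that unique ergodicity is exactly what converts the single vanishing $\epsilon_p(F)=0$ into vanishing on the whole kernel $\{i(F,\cdot)=0\}$ required by Miyachi's characterisation.
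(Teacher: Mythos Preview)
Your proof is correct and follows essentially the same route as the paper's: set $y_t=\horo_{[F]}^{t}(x)$, $z_t=\ray_{[F]}^{|t|}(x)$, use the isometric embedding of the Teichm\"uller disc to see that $\langle y_t\mid z_t\rangle_x\to+\infty$, then apply Miyachi's formula (\ref{gromov}) and Theorem~\ref{caraczero} with unique ergodicity to force every cluster point to be $[F]$. One small slip in your heuristic: the horocycle is \emph{not} at bounded hyperbolic distance from the geodesic ray (the distance from $-tc+\ii$ to the imaginary axis grows like $\log|t|$), but your other reason --- that $y_t$ and $z_t$ tend to the same ideal boundary point of the disc --- is already sufficient for the Gromov product to diverge.
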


Following results of Miyachi (Corollary 5.1 in \cite{miyachi1} and Theorem \ref{caraczero}, we have that a sequence $y_n$ in Teichmüler space converges to a class of uniquely ergodic foliation with respect to the Thurston embedding if and only if it converges to the same class of foliation with respect to the  Gardiner-Masur embedding. Thus we have

\begin{corollaire}
Let $F$ be a uniquely ergodic foliation and $x$ be a point in $\tei$. Then 
$$
\horo_{\left[ F \right]}^{t}\left( x\right) \underset{t\rightarrow \pm\infty}{\cvgth} \left[ F \right].
$$
\end{corollaire}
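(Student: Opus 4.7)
The plan is to reduce this corollary to Theorem \ref{theoergodic} by invoking the equivalence, recalled just before the statement, between Gardiner-Masur convergence and Thurston convergence when the limit is a uniquely ergodic projective foliation. First, Theorem \ref{theoergodic} already furnishes that $\horo_{\left[F\right]}^{t}\left(x\right)\cvggm\left[F\right]$ as $t\to\pm\infty$. Since $F$ is uniquely ergodic, the combination of Corollary~5.1 of \cite{miyachi1} (which gives the ``only if'' direction for uniquely ergodic limits) and Theorem \ref{caraczero} (which implies the ``if'' direction, by forcing the Miyachi function $\epsilon_p$ to be proportional to $i(F,\cdot)$ exactly when $p=[F]$ in $\teigmb$) yields that a sequence in $\tei$ Gardiner-Masur converges to $[F]$ if and only if it Thurston converges to $[F]$. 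Applying this equivalence to the horocyclic orbit finishes the proof.

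If one wanted a more self-contained argument that avoids appealing directly to this equivalence, one could imitate the strategy used in the simple closed curve case. Let $[G]\in\pmf$ be any Thurston cluster point of $\horo_{\left[F\right]}^{t}\left(x\right)$. Using the Gauss-Bonnet comparison $l_{x_t}^{2}(\beta)\leq 2\pi|\chi(X_0)|\ext_{x_t}(\beta)$ together with Theorem \ref{theoergodic} (which after extraction of the Miyachi function gives $\epsilon_{[F]}(\beta)=\ext_{x_0}^{-1/2}(F)\cdot i(F,\beta)$ for every $\beta\in\mathcal{S}$), one controls the growth of $l_{x_t}(\beta)/e^{d_{\mathrm{Th}}(x,x_t)}$ for $\beta$ with $i(F,\beta)=0$, and then Walsh's characterization of Thurston convergence forces $i(G,\beta)=0$ for every such $\beta$. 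Unique ergodicity of $F$ then identifies $[G]=[F]$.

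The main subtlety in either approach is not computational but conceptual: Theorem \ref{caraczero} requires a two-sided vanishing hypothesis on $\epsilon_p$, and Gardiner-Masur limits of a general sequence could \emph{a priori} be represented by functions $\epsilon_p$ that do not reduce to a multiple of $i(F,\cdot)$. The unique ergodicity of $F$ is what collapses all competing candidates and makes the two boundaries agree at the point $[F]$. Once this point is granted, the proof is essentially a one-line citation of Theorem \ref{theoergodic} and the preceding paragraph, and this is the route I would follow.
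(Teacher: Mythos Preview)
Your primary argument (first paragraph) is correct and is exactly the route the paper takes: invoke Theorem~\ref{theoergodic} for Gardiner--Masur convergence, then use Miyachi's equivalence (Corollary~5.1 of \cite{miyachi1} together with Theorem~\ref{caraczero}) between Gardiner--Masur and Thurston convergence at a uniquely ergodic point to conclude. The paper's proof is literally the sentence preceding the corollary.

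Your alternative sketch in the second paragraph, however, does not work as written. When $F$ is uniquely ergodic there is \emph{no} simple closed curve $\beta$ with $i(F,\beta)=0$ (indeed, $i(F,G)=0$ forces $G$ to be a multiple of $F$, which is never a curve). Hence the set of test curves you propose to feed into Walsh's criterion is empty, and the argument yields no constraint whatsoever on the Thurston cluster point $[G]$. The mechanism in the simple closed curve case works precisely because many curves have zero intersection with $\alpha$; that mechanism is unavailable here, and one genuinely needs the Miyachi equivalence you cite in your first paragraph.
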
 

We have seen that the horocyclic deformation directed by a simple closed curve or a uniquely ergodic foliation converges in the Gardiner-Masur boundary and the limit is exactly the same limit as the Teichmüller ray directed by the same foliation. Thus, in these two particular cases we have given a positive answer to Question \ref{questioncvg}. 
In the most general case, we can except that horocyclic deformation for a given direction converges to the same limit as the Teichmüller ray with the same direction. A negative result is given below and in some sense was already observed by Gardiner and Masur in \cite{gm} to prove that $\pmf$ is strictly contained in the Gardiner-Masur.

\subsection{An example of rational foliation}

\begin{figure}[ht]
\begin{center}
\psfrag{X}{$X_{0}^{d}$}
\psfrag{a1}{$\alpha_1$}
\psfrag{a2}{$\alpha_2$}
\psfrag{d}{$\delta$}
\psfrag{G}{$\Gamma$}
\psfrag{b1}{$\beta_1$}
\psfrag{b2}{$\beta_2$}
\includegraphics[width=0.9\linewidth]{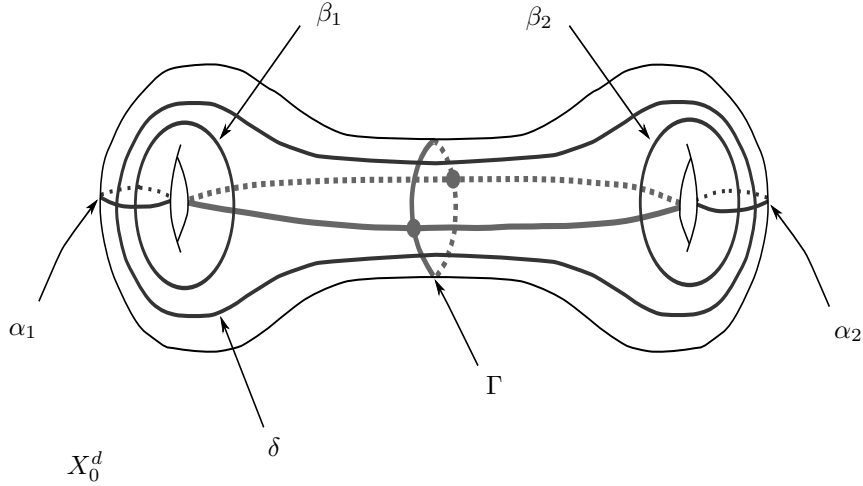}
\caption{On the symmetric Riemann surface $X_{0}^{d}$, we draw the critical graph $\Gamma$ of the measured foliation $F=\alpha_1 + \alpha_2$. Up to a Withehead move, we can assume that this measured foliation has two critical points of order 4.}\label{contreexemple}
\end{center}
\end{figure}

Let $X_0 ^d$ be a Riemann surface of genus $2$ obtained by gluing two  tori  with one boundary component along their boundary,  one of them being the mirror conformal structure of the other. We denote them by $T$ and $\overline{T}$. Thus, we get a natural anti-holomorphic involution on $X_0 ^d$ which can be seen as  a complex conjugation.  We denote it by $i_{X_0 ^d}$. We fix $\alpha_1$ and $\alpha_2$, two disjoints simple closed curves as in Figure \ref{contreexemple} such that $\alpha_2$ is obtained by conjugating $\alpha_1$. To be more precise, $\alpha_2 = i_{X}\left( \alpha_1 \right)$. Up to changing $T$, we can assume without loss of generality that
\begin{equation}\label{modulesym}
\ext_{T}\left( \alpha_{1}\right) = \ext_{\overline{T}}\left( \alpha_2 \right) = 1.
\end{equation}

We set $F= \alpha_1 + \alpha_2$ and $x_0 ^d=\left[ X_0 ^d, \id\right]$. Thus, by symmetry, we deduce that the quadratic differential $q_F$ is invariant by $i_{X_0 ^d}$ and that
\begin{equation}\label{modulesym2}
\ext_{x_0}\left( F \right)= \| q_F \|=2.
\end{equation}

By works of Marden and Masur in Section 2 of \cite{marden&masur},  we deduce that
\begin{equation}
\forall n\in\mathbb{N}, \,\horo_{\left[F \right]}^{2n} \left( x_0 ^d \right) = \left( \tau_{\alpha_1}^{n} \circ \tau_{\alpha_2}^{n}\right) \cdot x_0 ^d.
\end{equation}
To simplify notation, we set for any $n\in\mathbb{Z}$,  $x_n = \horo_{\left[F \right]}^{2n} \left( x_0 ^d \right)$.

By Kerckhoff's computation in \cite{kerckhoff} (see also the appendix of \cite{miyachi1}, for more details) and Equality (\ref{modulesym}) we  conclude that 
\begin{equation}\label{limitrationnel}
\ray_{\left[ F\right]}^{t}\left( x_0 ^d \right) \underset{t\rightarrow+\infty}{\cvggm} \left[ \left(  i\left( \alpha_1 , \cdot\right)^2 +  i\left( \alpha_2 , \cdot\right)^2   \right)^{\frac{1}{2}} \right] \in\prs.
\end{equation}

 As Miyachi proved in \cite{miyachi1}, this limit is not an element of $\pmf$. 

Following \cite{gm}, let us show that any cluster points  of $\left( x_n \right)$ in $\teigmb$ is different of the limit in (\ref{limitrationnel}). Let $q\in\teigmb$ be any cluster point of $\left( x_n \right)$. Assume that $q$ is equal to the limit of the Teichmüller ray directed by $F$. Then there exists $\lambda>0$ such that
\begin{equation}\label{absurde}
\forall \gamma \in\mathcal{S}, \,\epsilon_{q}\left( \gamma \right)^2 = \lambda \cdot \left(   i\left( \alpha_1 , \gamma \right)^2 +  i\left( \alpha_2 , \gamma\right)^2 \right).  
\end{equation}
However, if $\beta_1$, $\beta_2$ and $\delta$ are like in Figure \ref{contreexemple}, then for any integer $n$
\begin{align*}
\begin{cases}
\epsilon_{x_n}\left( \beta_1 \right)^2 &= \frac{1}{4 n^2}\cdot \ext_{X_0 ^d}\left( \tau_{\alpha_1}^{-n}\left( \beta_1 \right) \right), \\
\epsilon_{x_n}\left( \beta_2 \right)^2 &= \frac{1}{4 n^2}\cdot \ext_{X_0 ^d}\left( \tau_{\alpha_2}^{-n}\left( \beta_2 \right) \right), \\
\epsilon_{x_n}\left( \delta \right)^2 &= \frac{1}{4 n^2}\cdot \ext_{X_0 ^d}\left( \left(\tau_{\alpha_1}^{-n}\circ\tau_{\alpha_2}^{-n}\right)\left( \delta \right) \right).
\end{cases}
\end{align*}
Moreover, as for  $\frac{1}{n}\cdot \tau_{\alpha_i}^{-n}\left( \beta_i \right)$ tends to $\alpha_i$ (for $i=1, \, 2$) and $\frac{1}{n}\cdot \left(\tau_{\alpha_1}^{-n}\circ\tau_{\alpha_2}^{-n}\right)\left( \delta \right)$ tends to $F$ when $n\rightarrow \infty$, we deduce by continuity of the extremal length that 
\begin{align*}
\begin{cases}
\epsilon_{q}\left( \beta_1 \right)^2 &=\frac{1}{4}\cdot \ext_{X_0 ^d}\left( \alpha_1 \right), \\
\epsilon_{q}\left( \beta_2 \right)^2 &=\frac{1}{4}\cdot \ext_{X_0 ^d}\left( \alpha_2 \right), \\
\epsilon_{q}\left( \delta \right)^2 &=\frac{1}{4}\cdot \ext_{X_0 ^d}\left( F \right)=\frac{1}{2}.
\end{cases}
\end{align*}
By symmetry, we have $\ext_{X_0 ^d}\left( \alpha_1 \right)=\ext_{X_0 ^d}\left( \alpha_1 \right)=c$ and by the geometric definition of extremal length we have necessarily $c<1$. Thus, by comparing with Relation (\ref{absurde}), we get a contradiction.

We just saw that the horocyclic deformation directed by $F$ cannot converge to the same limit as the Teichmüller ray directed by the same foliation. However, the author does not know if in this particular  case, the horocyclic deformation converges in the Gardiner-Masur boundary.

\bibliographystyle{acm}
\bibliography{biblio}

\medskip
\medskip

\textsc{Vincent Alberge, Université de Strasbourg, 7 rue René Descartes, 67084 Strasbourg Cedex, France}

\textit{E-mail address:} { \href{mailto:alberge@math.unistra.fr}{\tt alberge@math.unistra.fr} } 

\end{document}